\newcommand{\maxdim}{n}
\newcommand{\ud}{\mathrm{d}}
\newcommand{\Om}{\Omega}
\newcommand{\e}{\varepsilon}
\newcommand{\vphi}{\varphi}
\newcommand{\F}{\mathcal{Y}}
\newcommand{\sm}{\smallsetminus}
\newcommand{\R}{\mathbb{R}}
\newcommand{\Rn}{\R^\maxdim}
\newcommand{\Rm}{\R^m}
\newcommand{\Z}{\mathbb{Z}}
\newcommand{\N}{\mathbb{N}}
\newcommand{\CIR}{\mathcal{C}}
\newcommand{\C}{\mathbb{C}}
\newcommand{\Cn}{\C^\maxdim}
\newcommand{\Csn}{\C_\bullet^\maxdim}
\newcommand{\Zn}{\Z_\maxdim}
\DeclareMathOperator{\Imp}{Im}
\renewcommand{\Im}{\Imp}
\newcommand{\Bm}{\beta}
\newcommand{\gh}{\hat{g}}
\newcommand{\gt}{\tilde{g}}
\newcommand{\Tor}{\mathbf{T}}
\newcommand{\Torn}{\Tor^\maxdim}
\newcommand{\Tornn}{\Tor^{2\maxdim}}
\newcommand{\X}{\mathbf{X}}
\newcommand{\g}{\gamma}
\newcommand{\G}{\Gamma}
\newcommand{\Ob}{\mathcal{O}}
\newcommand{\Sg}{\Sigma}
\newcommand{\Ss}{\mathbf{S}}
\newcommand{\Sk}[1]{\Ss_{#1}}
\newcommand{\Sn}{\Ss_\maxdim}
\newcommand{\Ai}{A_{\textrm{inc}}}
\newcommand{\Ac}{A_{\textrm{coh}}}
\newcommand{\abs}[1]{\left|#1\right|}
\newcommand{\tabs}[1]{\big|#1\big|}
\newcommand{\rset}[2]{\left\lbrace\, #1\,\left|\;#2\right.\right\rbrace}
\newcommand{\set}[2]{\rset{#1}{#2}}
\newcommand{\sset}[1]{\left\lbrace #1\right\rbrace}
\DeclareMathOperator{\id}{id}
\newcommand{\TG}{\sset{\id}}
\newcommand{\Cinf}{C^\infty}
\newcommand{\fleft}{\!\left}
\author{Christian Bick}
\address{Centre for Systems, Dynamics and Control and Department of Mathematics, University of Exeter, Exeter EX4 4QF, UK}
\title[Chimeras and Symmetry]{
Isotropy of Angular Frequencies and Weak Chimeras With Broken Symmetry
}
\date{\today}
\newtheorem{prop}{Proposition}
\newtheorem{lem}{Lemma}
\newtheorem{thm}{Theorem}
\newtheorem{cor}{Corollary}
\theoremstyle{definition}
\newtheorem{defn}{Definition}
\theoremstyle{remark}
\newtheorem{rem}{Remark}
\newcommand{\imagescaling}{0.9}
\begin{document}

\begin{abstract}
The notion of a weak chimeras provides a tractable definition for chimera states in networks of finitely many phase oscillators. Here we generalize the definition of a weak chimera to a more general class of equivariant dynamical systems by characterizing solutions in terms of the isotropy of their angular frequency vector---for coupled phase oscillators the angular frequency vector is given by the average of the vector field along a trajectory. Symmetries of solutions automatically imply angular frequency synchronization. We show that the presence of such symmetries is not necessary by giving a result for the existence of weak chimeras without instantaneous or setwise symmetries for coupled phase oscillators. Moreover, we construct a coupling function that gives rise to chaotic weak chimeras without symmetry in weakly coupled populations of phase oscillators with generalized coupling.
\end{abstract}
\maketitle


\section{Introduction}
The emergence of collective dynamics in networks of coupled oscillatory units is a fascinating phenomenon observed in science and technology~\cite{Strogatz2000, Pikovsky2003}. Symmetric phase oscillator networks provide paradigmatic models to understand collective dynamics in the weak coupling limit~\cite{Strogatz2004, Acebron2005, Tchistiakov1996, Ashwin2015}. Such dynamical systems with symmetry are equivariant with respect to the action of a group~\cite{Golubitsky1988, Golubitsky2002, Field2007a}, that is, the vector field commutes with the group action on phase space. Equivariance implies that any solution of the system is mapped to another solution by the action of the symmetry group and it typically constrains the dynamics, for example by giving rise to dynamically invariant subspaces. The solutions themselves may (but do not have to) have nontrivial symmetry, that is, there may be nontrivial elements elements of the symmetry group that keep the solution fixed, either pointwise or as a set. For example, for globally coupled identical oscillators the solution corresponding to full synchrony, where the states of all oscillators are equal, has full symmetry itself. Of course, there may be other solution with less symmetry relative to the symmetries of the system.

Recently, the observation of ``symmetry breaking'' in symmetrically coupled phase oscillator systems, i.e., the observation of solutions with localized synchronous dynamics coexisting with localized incoherence, has sparked a lot of interest. Such solutions, commonly known as chimera states---see~\cite{Panaggio2015} for a recent review---were first observed in symmetric rings of coupled phase oscillators~\cite{Kuramoto2002, Abrams2004}. In the limit of infinitely many oscillators, they correspond to stationary or periodic patterns of the phase density distribution~\cite{Abrams2008, Omel'chenko2013}. By contrast, it was not until recently that Ashwin and Burylko~\cite{Ashwin2014a} gave a testable mathematical definition for chimera states, a \emph{weak chimera}, for networks of finitely many phase oscillators whose phases $\vphi_k\in \Tor=\R/2\pi \Z$, $k=1,\dotsc,\maxdim$, evolve according to
\begin{equation}\label{eq:COsc}
\frac{\ud\vphi_k}{\ud t} = \dot\vphi_k = \omega + \frac{1}{n} \sum_{j=1}^n H_{kj} g(\vphi_k-\vphi_j).
\end{equation}
Here the~$H_{kj}$ determine the network topology (respecting a subgroup~$\G$ of the group~$\Sn$ of permutations of~$\maxdim$ symbols acting transitively on the indices of the oscillators) and~$g:\Tor\to\R$ is the generalized coupling (or phase interaction) function. Weak chimeras are defined in terms of partial angular frequency synchronization on trajectories. More precisely, if $\hat\vphi$ is a continuous lift of a solution~$\vphi$ of~\eqref{eq:COsc} with initial condition $\vphi^0$ to~$\Rn$ define the asymptotic angular frequency of oscillator~$k$ as
\begin{equation}\label{eq:Omegak}
\Omega_k(\vphi^0) = \lim_{T\to\infty}\frac{\hat\vphi(T)}{T}.
\end{equation}
According to~\cite{Ashwin2014a}, a compact, connected, chain-recurrent, and dynamically invariant set~$A\subset\Torn$ is a weak chimera if there are distinct oscillators $j,k,\ell$ such that $\Omega_j(\vphi^0)=\Omega_k(\vphi^0)\neq\Omega_\ell(\vphi^0)$ for all $\vphi^0\in A$.

Weak chimeras and angular frequency synchronization relate to symmetry. Assuming that all limits~\eqref{eq:Omegak} exist, we have a frequency vector
\[\Om(\vphi^0)=(\Om_1(\vphi^0), \dotsc, \Om_\maxdim(\vphi^0))\in\Rn.\]
The group~$\Sn$ also acts on~$\Rn$ by permuting indices. If~$A$ is a weak chimera as above and $\tau_{kj}\in\Sn$ denotes the transposition swapping indices~$k$ and~$j$ then $\tau_{kj}\Om(\vphi^0)=\Om(\vphi^0)$. That is, $\tau_{kj}$ is a symmetry of the angular frequency vector~$\Om(\vphi^0)$.
While weak chimeras have provided a suitable framework to derive for example existence results~\cite{Bick2015c}, there are two shortcomings. First, while chimera states have also been reported in more general oscillator models~\cite{Sethia2013, Zakharova2014} the definition above applies to phase oscillators only. Second, the symmetries of the angular frequency vector~$\Om$ may be different from the symmetries of the system. As a consequence, if~$A$ is a weak chimera then~$\tau_{kj}A$ may not be a weak chimera or even a solution of the system at all. Interestingly, while it has been argued that chimera states are relevant due to their nature of solutions with broken symmetry~\cite{Abrams2004}, their properties have never been phrased in terms of symmetries of the dynamical system.

The contribution of this paper is twofold: first, we give a definition of a weak chimera in the language of equivariant dynamical systems and, second, we show that symmetries of the solution are not necessary for the occurrence of weak chimeras. More precisely, we define weak chimeras in terms of the isotropy of the angular frequency vector which can be stated for more general oscillator systems. We observe that, in a suitable setup, asymptotic angular frequencies are averages of equivariant observables. Therefore, symmetries of solutions translate directly into symmetries of the angular frequencies. Thus, the presence of symmetries of solutions facilitates the emergence of weak chimeras and, in fact, most weak chimeras that have been constructed explicitly~\cite{Ashwin2014a, Panaggio2015b, Bick2015c} are solutions with (instantaneous) symmetries. Is it possible to construct weak chimeras without instantaneous or setwise symmetries for which the angular frequencies have symmetries that are not a property of the solution itself? This question motivates the second contribution. Extending recent persistence results~\cite{Bick2015c} that rely on constructing generalized coupling functions between oscillators, we prove a persistence result for weak chimeras with trivial symmetry in weakly coupled populations of phase oscillators. Moreover, we present an explicit example of a $C^\infty$ coupling function that gives rise to a chaotic weak chimera without instantaneous or setwise symmetries in a nontrivially coupled system.

This paper is organized as follows. In Section~\ref{sec:Prelims} we review some terminology on equivariant dynamics that is needed in the subsequent sections. In Section~\ref{sec:WeakChimeras} we then apply these notions to general oscillator systems with symmetry which yields a new definition of a weak chimera in terms of symmetries of the angular frequency vector. As we show in Section~\ref{sec:WeakChimerasPhaseOsc} this definition is compatible with previous definitions. In Section~\ref{sec:WeakChimNoSym} we prove a persistence result for weak chimeras without instantaneous or average symmetries. Finally, we present an explicit example of a coupling function which gives rise to chaotic weak chimeras with trivial symmetries in Section~\ref{sec:Example} and finish with some concluding remarks.

\section{Preliminaries}
\label{sec:Prelims}

\subsection{Quasi-regular points}

Let~$\X$ be a compact differentiable manifold with a flow $\Phi_t:\X\to\X$, $t\in\R$. 
A point $x\in\X$ is \emph{quasi-regular} if the limit
\[\lim_{T\to\infty}\frac{1}{T}\int_{0}^{T}f(\Phi_t(x))\ud t\]
exists for all continuous functions $f:\X\to\R$. 

\begin{thm}[\cite{Schwartzman1957, Oxtoby1952}]
The set of points which are not quasi-regular has zero measure with respect to every finite measure on~$\X$ that is invariant under the flow~$\Phi_t$.
\end{thm}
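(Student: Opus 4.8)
The plan is to reduce the statement---which requires the time average to exist for \emph{all} continuous observables simultaneously---to Birkhoff's ergodic theorem, which treats one observable at a time, by exploiting the separability of the space of continuous functions. Fix a finite measure~$\mu$ on~$\X$ that is invariant under~$\Phi_t$. For $f\in C(\X)$ and $T>0$ write
\[A_Tf(x) = \frac{1}{T}\int_0^T f(\Phi_t(x))\,\ud t,\]
so that by definition $x$ is quasi-regular precisely when $\lim_{T\to\infty}A_Tf(x)$ exists for every $f\in C(\X)$.

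First I would invoke Birkhoff's ergodic theorem in its continuous-time form for a single observable~$f$: since $\mu$ is finite and $\Phi_t$-invariant, $\lim_{T\to\infty}A_Tf(x)$ exists for $\mu$-almost every~$x$. The naive attempt would then intersect the resulting full-measure sets over all $f\in C(\X)$; but $C(\X)$ is uncountable, so an uncountable union of null sets need not be null and this fails outright. This is the main obstacle, and it is resolved by the observation that, because~$\X$ is a compact manifold, $C(\X)$ equipped with the supremum norm is separable. I would choose a countable dense subset $\sset{f_n : n\in\N}\subset C(\X)$, let $N_n$ denote the $\mu$-null set on which $\lim_{T\to\infty}A_Tf_n(x)$ fails to exist, and set $N=\bigcup_{n\in\N}N_n$, which is again $\mu$-null.

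It then remains to show that convergence along the dense family $\sset{f_n}$ already forces convergence for \emph{every} continuous observable, so that each $x\notin N$ is quasi-regular. Fix $x\notin N$, an arbitrary $f\in C(\X)$, and $\e>0$, and choose $f_n$ with $\norm{f-f_n}_\infty<\e$. Because $\abs{f(\Phi_t(x))-f_n(\Phi_t(x))}\le\norm{f-f_n}_\infty$ holds for every~$t$, averaging over $[0,T]$ gives $\abs{A_Tf(x)-A_Tf_n(x)}\le\norm{f-f_n}_\infty<\e$ for all $T>0$. Since $x\notin N_n$, the quantities $A_Tf_n(x)$ form a Cauchy family as $T\to\infty$, so the triangle inequality yields $\limsup_{S,T\to\infty}\abs{A_Tf(x)-A_Sf(x)}\le 2\e$; letting $\e\to 0$ shows $A_Tf(x)$ is Cauchy and hence convergent. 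Thus the set of non-quasi-regular points is contained in~$N$ and has $\mu$-measure zero, and since $\mu$ was an arbitrary finite invariant measure the claim follows. I would note in passing that the set of quasi-regular points equals $\bigcap_n\set{x}{\lim_{T\to\infty} A_Tf_n(x)\text{ exists}}$ and is therefore measurable, so the inclusions above are between honest measurable sets.
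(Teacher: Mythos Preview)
The paper does not give its own proof of this theorem: it is simply quoted as a known result with references to Schwartzman and Oxtoby. So there is nothing to compare your argument against in the paper itself.

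That said, your proof is correct and is precisely the standard route to this result: Birkhoff's pointwise ergodic theorem handles one observable at a time, separability of $C(\X)$ (which holds because $\X$ is a compact manifold, hence a compact metric space) reduces the uncountable family to a countable one, and the uniform estimate $\abs{A_Tf(x)-A_Tf_n(x)}\le\norm{f-f_n}_\infty$ upgrades convergence on the dense subset to convergence on all of $C(\X)$. Your remark on measurability of the set of quasi-regular points is also in order. This is essentially the argument one finds in Oxtoby's survey, so you have reproduced the intended proof.
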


\subsection{Equivariant dynamical systems}

Let $F:\X\to \textrm{T}\X$ be a smooth vector field on~$\X$ where~$\textrm{T}\X$ denotes the tangent 
bundle. Suppose that a group~$\Gamma$ acts on~$\X$. The vector field~$F$ 
is $\G$-equivariant if
\begin{equation}
F(\gamma x) = \hat{\gamma}F(x)
\end{equation}
for all $\gamma\in\Gamma$ where~$\hat{\gamma}$ is the induced 
action on the tangent space. A $\G$-equivariant vector field defines 
a \emph{$\Gamma$-equivariant dynamical system}
\begin{equation}
\label{eq:EqDynSyst}
\dot x = F(x)
\end{equation}
on~$\X$~\cite{Golubitsky2002, Field2007a}. For a set $A\subset \X$ 
define the set of \emph{instantaneous symmetries}
\begin{align}
T(A)&=\set{\gamma\in\G}{\gamma x = x \text{ for all }x\in A}
\intertext{and the set of \emph{symmetries on average} (or setwise symmetries)}
\Sg(A)&=\set{\gamma\in\G}{\gamma A = A}.
\end{align}
Clearly, $T(A)\subset\Sg(A)$. If $\G_x = \set{\g\in\G}{\g x = x}$ 
denotes the \emph{stabilizer} or \emph{isotropy subgroup} of $x\in\X$ 
we have $T(A)=\bigcap_{x\in A}\G_x$. 

Note that if $\g A\cap A=\emptyset$ for all $\g\in\G\sm\TG$ then 
$\Sg(A)=\TG$. The converse holds only under additional 
assumptions~\cite{Ashwin1995}. Henceforth, let~$\Phi_t:\X\to\X$, $t\in\R$, denote 
the flow defined by the differential equation~\eqref{eq:EqDynSyst}. A set~$A\subset\X$ is (forward) 
\emph{flow-invariant} or \emph{dynamically invariant} if 
$\Phi_t(A)\subset A$ for all $t\geq 0$. Moreover, $A$ is \emph{stable} if for
every neighborhood~$U$ of~$A$ there exists an open 
neighborhood~$V\subset U$ of~$A$ such that $\Phi_t(V)\subset U$ for 
all $t\geq 0$. A compact stable set~$A$ is an \emph{attractor} if~$A=\omega(x)$ 
is the $\omega$-limit set of some point $x\in\X$.

For attractors and the action of the orthogonal group~$O(\maxdim)$ 
on~$\Rn$ there is the following 
dichotomy~\cite{Melbourne1993} that characterizes the symmetries 
on average.

\begin{prop}
\label{prop:Symmetry}
Let $\Gamma\subset O(n)$ be a finite subgroup. For an attractor~$A\subset\Rn$ we 
have for any $\g\in\G$ either $\g A = A$ or $\g A\cap A=\emptyset$.
\end{prop}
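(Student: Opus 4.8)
The plan is to prove the dichotomy by reducing it to a single one-sided containment and then upgrading that to an equality using finiteness of the group. First I would record the consequence of equivariance at the level of the flow: since $F$ is $\G$-equivariant and the action on $\Rn$ is linear (so that $\hat\g=\g$), the flow commutes with the group action, $\Phi_t(\g x)=\g\Phi_t(x)$. This follows from uniqueness of solutions, as $t\mapsto\g\Phi_t(x)$ solves $\dot y=F(y)$ with $y(0)=\g x$. Consequently, writing $A=\omega(x)$ for the point $x$ whose $\omega$-limit set is $A$, we get $\g A=\g\,\omega(x)=\omega(\g x)$, so $\g A$ is again an $\omega$-limit set and, being the image of the compact set $A$ under the homeomorphism $\g$, is again compact and nonempty. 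It therefore suffices to show that $\g A\cap A\neq\emptyset$ already forces $\g A=A$.

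The heart of the argument is a trapping lemma exploiting the stability of $A$: if $A$ is compact and stable and $B=\omega(y)$ is any $\omega$-limit set with $B\cap A\neq\emptyset$, then $B\subset A$. To prove it, fix $z\in B\cap A$ and an arbitrary bounded open neighborhood $U$ of $A$. Stability supplies a neighborhood $V$ of $A$ with $\Phi_t(V)\subset U$ for all $t\geq 0$. Since $z\in\omega(y)$, there is a sequence $t_m\to\infty$ with $\Phi_{t_m}(y)\to z\in A\subset V$, so $\Phi_T(y)\in V$ for some time $T$; then $\Phi_{T+t}(y)=\Phi_t(\Phi_T(y))\in\Phi_t(V)\subset U$ for all $t\geq 0$. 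Thus the forward tail of the orbit of $y$ lies in $U$, whence $\omega(y)\subset\overline U$. Letting $U$ range over a neighborhood basis of the compact set $A$, for which $\bigcap\overline U=A$, gives $B=\omega(y)\subset A$.

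Applying this lemma with $B=\g A$, which is legitimate because $\g A$ is an $\omega$-limit set, yields $\g A\subset A$ whenever $\g A\cap A\neq\emptyset$. I would then promote this inclusion to equality using that $\G$ is finite: if $\g$ has order $m$, iterating $\g A\subset A$ gives $A=\g^m A\subset\g^{m-1}A\subset\dots\subset\g A\subset A$, forcing every inclusion to be an equality and in particular $\g A=A$. Combined with the complementary alternative $\g A\cap A=\emptyset$, this is exactly the asserted dichotomy.

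The step I expect to be the main obstacle is the trapping lemma, specifically making precise that a \emph{single} approach of the orbit of $y$ to $A$ drives the \emph{entire} set $\omega(y)$ into every neighborhood of $A$; this is precisely where stability (rather than mere invariance of $A$) is indispensable, and where one must ensure the neighborhoods $U$ can be taken to intersect down to $A$. It is worth noting that the hypothesis $\G\subset O(\maxdim)$ enters this argument only through the facts that each $\g$ acts as a flow-commuting homeomorphism and has finite order; the reasoning applies verbatim whenever these two properties hold.
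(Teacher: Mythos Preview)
Your argument is correct. The key steps---that equivariance makes $\g A=\omega(\g x)$ an $\omega$-limit set, that stability of~$A$ traps any $\omega$-limit set meeting~$A$ inside~$A$, and that finiteness of the order of~$\g$ upgrades $\g A\subset A$ to equality---are all sound, and your treatment of the trapping lemma is clean.

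There is nothing to compare against in the paper itself: Proposition~\ref{prop:Symmetry} is not proved here but quoted from Melbourne, Dellnitz and Golubitsky~\cite{Melbourne1993} (see also~\cite{Golubitsky2002}). Your proof is essentially the standard argument given in those references, so you have reconstructed the cited proof rather than found an alternative route. Your closing observation that only the homeomorphism property and finite order of~$\g$ are used (not orthogonality per se) is also in line with how the result is stated and used in that literature.
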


\begin{rem}
The same statement holds for repellers---dynamically invariant sets that are attractors when time is reversed. However, it does not necessarily hold for dynamically invariant sets of saddle type, sets that are attracting (or repelling) in a more general sense, or heteroclinic attractors.
\end{rem}

For $\delta>0$ let~$B_\delta(A)$ denote an (open) $\delta$-neighborhood 
of~$A$.

\begin{cor}
\label{cor:NoSymPerturb}
Let $\Gamma\subset O(n)$ be a finite subgroup and let~$A\subset\Rn$ be compact attractor for the flow defined by~\eqref{eq:EqDynSyst}. If $\Sg(A) = \TG$ then there exists a $\delta>0$ such that $\Sg(D)=\TG$ for any $D\subset B_\delta(A)$.
\end{cor}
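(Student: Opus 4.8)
The plan is to combine the dichotomy of Proposition~\ref{prop:Symmetry} with compactness and the fact that each element of~$O(\maxdim)$ acts on~$\Rn$ as an isometry. Since $\Sg(A)=\TG$ by hypothesis, for every $\g\in\G\sm\TG$ we have $\g A\neq A$; Proposition~\ref{prop:Symmetry} then forces $\g A\cap A=\emptyset$. As~$A$ is compact and~$\g$ is a homeomorphism, $\g A$ is compact as well, so these two disjoint compact sets have strictly positive distance $d_\g := \mathrm{dist}(A,\g A)>0$.

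Because~$\G$ is finite, so is $\G\sm\TG$, and I would put $\delta := \tfrac{1}{2}\min_{\g\in\G\sm\TG} d_\g>0$ (the minimum over the empty set being vacuous if $\G=\TG$, in which case the statement is immediate). The key observation is that, since each~$\g\in O(\maxdim)$ preserves distances, $\g B_\delta(A)=B_\delta(\g A)$. A one-line triangle-inequality argument then yields $B_\delta(A)\cap B_\delta(\g A)=\emptyset$ for every $\g\in\G\sm\TG$: a common point would lie within~$\delta$ of both~$A$ and~$\g A$, so that $d_\g<2\delta\leq d_\g$, a contradiction.

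With this~$\delta$ fixed, let $D\subset B_\delta(A)$ be nonempty and let $\g\in\G\sm\TG$. Then $\g D\subset\g B_\delta(A)=B_\delta(\g A)$, which is disjoint from $B_\delta(A)\supset D$; hence $\g D\cap D=\emptyset$, and since $D\neq\emptyset$ this gives $\g D\neq D$, that is $\g\notin\Sg(D)$. As~$\g$ ranged over all non-identity elements, $\Sg(D)=\TG$, as claimed.

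The argument is short once Proposition~\ref{prop:Symmetry} is available, and there is no serious obstacle; the one point that genuinely matters is the appeal to the attractor hypothesis. It enters only through the dichotomy: for a general flow-invariant set one could have $\g A\neq A$ yet $\g A\cap A\neq\emptyset$, and then no neighborhood of~$A$ could be separated from its image under~$\g$, so no uniform~$\delta$ would exist. I would also note explicitly that the claim is intended for nonempty~$D$, since $\Sg(\emptyset)=\G$ trivially.
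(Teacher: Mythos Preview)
Your proof is correct and follows essentially the same route as the paper: invoke Proposition~\ref{prop:Symmetry} to get $\g A\cap A=\emptyset$ for every nontrivial~$\g$, then use compactness (and finiteness of~$\G$) to obtain a uniform $\delta>0$ with $\g B_\delta(A)\cap B_\delta(A)=\emptyset$, from which $\Sg(D)=\TG$ for all $D\subset B_\delta(A)$ is immediate. You have simply made explicit the isometry step $\g B_\delta(A)=B_\delta(\g A)$ and the role of finiteness in taking the minimum over~$\g$, which the paper leaves to the reader; your remarks on the attractor hypothesis and the nonempty-$D$ caveat are accurate and well placed.
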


\begin{proof}
Suppose that $\Sg(A) = \TG$. By Proposition~\ref{prop:Symmetry} 
we have $\g A \cap A=\emptyset$ for any $\g\neq\id$. Since~$A$ is 
closed there exists a $\delta>0$ such that
$\g B_\delta(A) \cap B_\delta(A)=\emptyset$. Therefore, $\Sg(D)=\TG$
for any $D\subset B_\delta(A)$.
\end{proof}

\subsection{Equivariant observables}

Suppose that~$\G$ acts on both~$\X$ and~$\Rm$ for some $m\in\N\sm\sset{0}$.

\begin{defn}
A continuous $\G$-equivariant map $\Ob:\X\to\Rm$ is an \emph{observable}.
\end{defn}

Given a solution~$x(t)$ of~\eqref{eq:EqDynSyst} with initial condition $x(0)=x^0$, the limit
\begin{equation}
\label{eq:AvLimit}
K_\Ob(x^0) = \lim_{T\to\infty}\frac{1}{T}\int_{0}^{T}\Ob(x(t))\ud t
\end{equation}
(if it exists) is an \emph{average of~$\Ob$ along the trajectory~$x$} (integrate componentwise if $m>1$). The limit exists in particular for every quasi-regular initial condition~$x^0$ and henceforth we will always assume that $x^0\in\X$ is quasi-regular when averages~\eqref{eq:AvLimit} are evaluated.

Suppose that~$A\subset\X$ is dynamically invariant and supports a $\Phi_t$-invariant ergodic probability measure~$\mu$. Write
\begin{equation}
\label{eq:SpatAvg}
K^\mu_\Ob(A) = \int_A\Ob(x)\ud\mu.
\end{equation}
By the Birkhoff ergodic theorem~\cite[Theorem~4.1.2]{Katok1995} we have 
\begin{equation}
\label{eq:ErgAvg}
K_\Ob(x^0) = K^\mu_\Ob(A).
\end{equation}
for~$\mu$-almost every $x^0\in A$. In particular, the limit~\eqref{eq:AvLimit} exists for $\mu$-almost every $x^0\in A$. For ease of notation, we will simply write $K_\Ob(A)=K^\mu_\Ob(A)$ unless the choice of measure is important. Of course, not every ergodic invariant measure is ``physically relevant'' since~$\mu$ may be singular with respect to the Lebesgue measure. If an attractor~$A$ supports a Sinai--Ruelle--Bowen (SRB) measure~$\mu$~\cite{Young2002, Katok1995} there is a neighborhood~$W$ of~$A$ such that~\eqref{eq:ErgAvg} holds for Lebesgue-almost every $x^0\in W$. Thus the average~\eqref{eq:SpatAvg} is observed for ``typical'' initial conditions with respect to the Lebesgue measure.

Now~$K_\Ob(A)$ has an isotropy group~$\Gamma_{K_\Ob(A)}$ and a simple calculation~\cite{Golubitsky2002} shows that
\begin{equation}
\label{eq:IsotropySubset}
\Sg(A)\subset\Gamma_{K_\Ob(A)},
\end{equation}
that is, any symmetry on average is contained in the isotropy group of the observation.

The converse does not hold for general observables.
\emph{Detectives}~\cite{Golubitsky2002, Barany1993, Dellnitz1994, Ashwin1997}
are an important class of observables for which the isotropy is generically equal to the symmetries on average. Given a suitably large~$m\in\N\sm\sset{0}$, an observable is an (ergodic) detective 
if for any $\omega$-limit set~$A$ there exists an open dense set 
of near-identity
$\G$-equivariant diffeomorphisms $\psi:\Rm\to\Rm$ such that 
$\Gamma_{K_\Ob(\psi(A))}=\Sg(A)$.
Hence, detectives are particular observables to ``detect'' the 
symmetries of attractors.


\section{Weak Chimeras and Symmetries on Average}
\label{sec:WeakChimeras}

\subsection{Isotropy of angular frequencies and weak chimeras}\label{WeakChimSym}
The symmetry point of view now allows to define weak chimeras in terms of their symmetries as solutions relative to the symmetries of the system itself. 
Write $i=\sqrt{-1}$. Let $\Gamma\subset\Sn$ be a subgroup that acts transitively on~$\Csn := (\C\sm\sset{0})^\maxdim$ by permuting coordinates and suppose that $F:\Csn\to\Cn$ is $\Gamma$-equivariant. The map~$F=(F_1, \dotsc, F_\maxdim)$ determines a dynamical system on~$\Csn$ where the evolution of~$z = (z_1, \dotsc, z_k)$\footnote{One can think of each complex variables~$z_k$ representing phase and amplitude of an oscillator.} is given by
\begin{equation}\label{eq:CmplxDyn}
\dot z_k = z_k F_k(z).
\end{equation}
For $k\in\sset{1, \dotsc, \maxdim}$ define
\begin{equation}
C_k(z) = \frac{z_k}{\abs{z_k}}.
\end{equation}
In the following we assume that~$F$ is such that (a) the dynamics of~\eqref{eq:CmplxDyn} are well defined on~$\Csn$, (b) we have~$\omega(z)\subset\Csn$ for all $z\in\Csn$ and (c)~the derivative $C_k'(z(t)) := \frac{\ud}{\ud t}C_k(z(t))$ exists for any trajectory~$z(t)$. These assumptions are easy to work with but can be relaxed as one typically only needs well-defined dynamics on a neighborhood of $\Torn\subset\Csn$.

Note that~$C_k(z_1, \dotsc, z_n)$ projects onto the unit circle in the $k$th coordinate. Let $\gamma_T$ denote the parametrized curve in~$\Csn$ determined by a solution $z(t)$ of~\eqref{eq:CmplxDyn} for $t\in[0, T]$. The change of argument of~$z_k$ along~$\gamma_T$ is given by
\begin{equation}
\Delta\arg C_k(\gamma_T) =  
\frac{1}{i}\int_0^T \frac{C_k'(z(t))}{C_k(z(t))}\ud t = \int_0^T \Im(F_k(z(t)))\ud t.
\end{equation}
Thus, we obtain the \emph{average angular frequency in the $k$th coordinate} (equivalent to the average winding number when multiplied by~$2\pi$)
\begin{equation}
\Omega_k(z^0) := \lim_{T\to\infty}\frac{1}{T}\int_0^T \Im(F_k(z(t)))\ud t
\end{equation}
along a trajectory~$z(t)$ with initial condition~$z^0$.

\begin{defn}
\label{def:FrequencyVector}
The vector \[\Om(z^0) = (\Omega_1(z^0), \dotsc, \Omega_\maxdim(z^0))\] is the \emph{angular frequency vector} of the trajectory with initial condition~$z^0\in\Csn$.
\end{defn}

Since~$\Sn$ also acts on~$\Rn$ by permuting indices,~$\Im(F):\Csn\to\Rn$ is a $\Gamma$-equivariant observable for~\eqref{eq:CmplxDyn} and 
\[\Om(z^0) = K_{\Im(F)}(z^0),\] 
that is, the angular frequency vector is the observation of~$\Im(F)$ along a trajectory. For a compact and invariant set $A\subset\Csn$ with an unique ergodic invariant measure we write $\Om(A) = K_{\Im(F)}(A)$ for the \emph{angular frequency vector of~$A$}.

The symmetries of the system~\eqref{eq:CmplxDyn} now allow to phrase angular frequency synchronization in terms of the isotropy of the angular frequency vector. An observation of $\Im(F)$ has isotropy subgroup $\G_{\Om(A)}\subset\G$. This motivates a definition of a weak chimera~\cite{Ashwin2014a}---originally limited to networks of phase oscillator---to more general oscillator systems~\eqref{eq:CmplxDyn}.

\begin{defn}
\label{def:SymWeakChimera}
A compact, connected, chain-recurrent, and dynamically invariant set~$A\subset\Csn$ is a \emph{weak chimera} for~\eqref{eq:CmplxDyn} if
\[\TG\subsetneq \Gamma_{\Om(\vphi^0)} \subsetneq \G\]
for all $\vphi^0\in A$. If a weak chimera~$A$ supports an SRB measure then it is called \emph{observable} and we have
\[\TG\subsetneq \Gamma_{\Om(A)} \subsetneq \G.\]
\end{defn}

\begin{rem}
Asymptotic winding (or rotation) numbers can be defined in a more general setting: they quantify how trajectories of a given flow wind around a topological space~$\X$; cf.~\cite{Schwartzman1957, Walsh1995} for details. These winding numbers are defined for continuous maps $f:\X\to S^1=\set{z\in\C}{\abs{z}=1}$. For spaces with finitely generated homology, it suffices to evaluate winding numbers for maps~$f_k$ corresponding to a basis of the first cohomology~\cite{Schwartzman1957}.

Here we have $\X=\Csn$ and the maps~$C_k$ defined above correspond to the generators of the homology of~$\Csn$. Since we consider flows given by a $\G$-equivariant differential equation, we characterize weak chimeras by the symmetry properties of the asymptotic winding numbers. In the language of asymptotic cycles, these are solutions where for certain ``directions'' the winding behavior is the same while for other directions it is distinct. This suggests that the notion can be further extended to equivariant dynamical systems on more general~$\X$ with nontrivial homology.
\end{rem}

Note that the weak chimeras of Definition~\ref{def:SymWeakChimera} are defined solely in terms of the symmetry properties of the system. Moreover, the definition extends beyond the weak coupling limit of interacting limit cycle oscillators~\cite{Ashwin1992}: systems of the form~\eqref{eq:CmplxDyn} describe dynamical systems close to a Hopf bifurcation~\cite{Ashwin2015a} or more general oscillator models where ``amplitude-mediated chimeras'' have been observed~\cite{Sethia2013}. Moreover, the next proposition asserts that the change of argument of~$z_k$ along $C_\ell(\gamma_T)$ cannot be bounded to obtain nontrivial winding numbers; such dynamics are observed for ``pure amplitude chimeras''~\cite{Zakharova2014} and thus our definition is sufficiently general to provide a rigorous framework for such chimeras.

\begin{prop}
Suppose that $z(t)$ is a solution of~\eqref{eq:CmplxDyn} such that there are 
$j\neq\ell$, $M,R>0$ such that $\Delta\arg C_\ell(\gamma_T)<M$ and $\Delta\arg C_j(\gamma_T)>RT$ 
for all~$T$. Then $\Gamma_{\Om(A)}\subsetneq\G$.
\end{prop}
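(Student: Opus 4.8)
The plan is to deduce that the coordinates $j$ and $\ell$ have \emph{distinct} angular frequencies, and then to use transitivity of the $\G$-action on coordinates to conclude that the full group cannot fix the frequency vector. Throughout I would take $z^0$ to be a quasi-regular point of the invariant set $A$ carrying the ergodic measure, so that the averages below exist and $\Om(z^0)=\Om(A)$ by the Birkhoff ergodic theorem; this is how the single-trajectory hypotheses get transferred to the set-level quantity $\Om(A)$ appearing in the conclusion.

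First I would translate the two growth hypotheses into bounds on the limits defining $\Omega_\ell$ and $\Omega_j$. Since $\Omega_k(z^0)=\lim_{T\to\infty}\frac1T\Delta\arg C_k(\gamma_T)$, the upper bound $\Delta\arg C_\ell(\gamma_T)<M$ for all $T$ yields
\[
\Omega_\ell(z^0)=\lim_{T\to\infty}\frac{1}{T}\Delta\arg C_\ell(\gamma_T)\leq\lim_{T\to\infty}\frac{M}{T}=0,
\]
while $\Delta\arg C_j(\gamma_T)>RT$ gives $\frac1T\Delta\arg C_j(\gamma_T)>R$ for every $T$, hence $\Omega_j(z^0)\geq R>0$. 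Consequently $\Omega_\ell(z^0)\leq 0<\Omega_j(z^0)$, so the two frequencies differ.

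Next I would argue at the level of the isotropy group. Recall that the permutation action of $\G\subset\Sn$ on $\Rn$ sends $(\sigma\cdot\Om)_k=\Omega_{\sigma^{-1}(k)}$. Suppose, for contradiction, that $\Gamma_{\Om(A)}=\G$, i.e. every $\sigma\in\G$ fixes $\Om(A)$. Because $\G$ acts transitively on the coordinate indices, there is a $\sigma\in\G$ with $\sigma(\ell)=j$; invariance then forces $\Omega_j(A)=\Omega_{\sigma(\ell)}(A)=\Omega_\ell(A)$. This contradicts $\Omega_\ell<\Omega_j$ from the first step, so $\Gamma_{\Om(A)}\subsetneq\G$, as claimed.

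The computation is short, and the only point requiring care is bookkeeping rather than a genuine obstacle. One must ensure the limits defining $\Omega_j,\Omega_\ell$ actually exist (guaranteed by the quasi-regularity choice, which also identifies $\Om(z^0)$ with $\Om(A)$), and observe that the one-sided bound $\Delta\arg C_\ell(\gamma_T)<M$ controls only the upper limit of the average of $\Im(F_\ell)$; since $\Omega_j>0$, this single inequality is nevertheless enough to separate the two frequencies. Transitivity of $\G$ on the coordinates is exactly the ingredient that upgrades the scalar inequality $\Omega_j\neq\Omega_\ell$ into the strict inclusion $\Gamma_{\Om(A)}\subsetneq\G$.
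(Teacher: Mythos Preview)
Your proof is correct and follows essentially the same approach as the paper, which simply says ``Immediate from $\Omega_k(z^0) = \lim_{T\to \infty}\frac{1}{T}\Delta\arg C_k(\gamma_T)$.'' You have spelled out the details the paper leaves implicit---in particular the use of transitivity of the $\G$-action to pass from $\Omega_j\neq\Omega_\ell$ to $\Gamma_{\Om(A)}\subsetneq\G$, and the bookkeeping linking $\Om(z^0)$ to $\Om(A)$---but the underlying argument is the same.
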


\begin{proof}
Immediate from $\Omega_k(z^0) = \lim_{T\to \infty}\frac{1}{T}\Delta\arg C_k(\gamma_T)$.
\end{proof}

Definition~\ref{def:SymWeakChimera} is compatible with the action of the symmetry group on~$\Csn$.

\begin{prop}
If $A\subset\Torn$ is a weak chimera, so is $\g A$ for any $\g\in\G$.
\end{prop}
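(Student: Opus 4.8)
The plan is to exploit the $\G$-equivariance of the entire setup. Each $\g\in\G$ acts on $\Csn$ as a homeomorphism (a permutation of coordinates) that commutes with the flow, $\Phi_t\circ\g = \g\circ\Phi_t$, and with the observable, $\Im(F)(\g z) = \g\,\Im(F)(z)$. I would first dispatch the topological and dynamical conditions. Since $\g$ is a homeomorphism of $\Torn$, it carries compact sets to compact sets and connected sets to connected sets, so $\g A$ is compact and connected. Flow-invariance transfers because $\Phi_t(\g A) = \g\,\Phi_t(A)\subset\g A$ for all $t\geq 0$. Chain-recurrence transfers because $\g$ conjugates the flow to itself; as $\g$ and its inverse are uniformly continuous on the compact space $\Torn$, a chain in $A$ is mapped to a chain in $\g A$ with the error scaled by the modulus of continuity of $\g$, and conversely, so the two sets are chain-recurrent together.

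The crux is the behavior of the angular frequency vector and its isotropy under $\g$. I would compute $\Om(\g z^0)$ directly from $\Om = K_{\Im(F)}$. For the trajectory through $\g z^0$, equivariance of the flow gives $\Phi_t(\g z^0) = \g\,\Phi_t(z^0)$, and equivariance of the observable then gives $\Im(F)(\Phi_t(\g z^0)) = \g\,\Im(F)(\Phi_t(z^0))$. Because $\g$ acts linearly on $\Rn$ by permuting indices, it commutes with the time average, so
\[
\Om(\g z^0) = \g\,\Om(z^0).
\]
Consequently the isotropy transforms by conjugation,
\[
\Gamma_{\Om(\g z^0)} = \Gamma_{\g\,\Om(z^0)} = \g\,\Gamma_{\Om(z^0)}\,\g^{-1}.
\]

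Finally I would observe that conjugation by $\g\in\G$ is an inner automorphism of $\G$, hence a bijection preserving subgroup structure and strict inclusions, and that it fixes both endpoints of the chain in Definition~\ref{def:SymWeakChimera}: $\g\,\TG\,\g^{-1} = \TG$ and $\g\,\G\,\g^{-1} = \G$ (the latter since $\g\in\G$). Applying this conjugation to $\TG\subsetneq\Gamma_{\Om(z^0)}\subsetneq\G$ therefore yields $\TG\subsetneq\Gamma_{\Om(\g z^0)}\subsetneq\G$. As this holds for every $z^0\in A$, equivalently for every $\g z^0\in\g A$, the set $\g A$ meets the weak-chimera condition. I expect the only genuinely substantive point, rather than routine bookkeeping, to be verifying that the induced action on isotropy groups really produces the conjugation identity $\Gamma_{\g v} = \g\,\Gamma_v\,\g^{-1}$ and that this conjugation preserves the two strict inclusions while fixing $\TG$ and $\G$; once this is secured the result is immediate. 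For the \emph{observable} refinement one additionally notes that $\g$ pushes an SRB measure on $A$ forward to an SRB measure on $\g A$, so $\Om(\g A) = \g\,\Om(A)$ and the identical conjugation argument applies.
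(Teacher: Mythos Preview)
Your proposal is correct and follows the same approach as the paper, which simply states that the assertion follows directly from $\G$-equivariance of~$F$. You have fleshed out in detail exactly what that one-line proof leaves implicit: the transfer of the topological/dynamical conditions under the homeomorphism~$\g$, the transformation law $\Om(\g z^0)=\g\,\Om(z^0)$, and the resulting conjugation of isotropy subgroups preserving the strict inclusions.
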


\begin{proof}
The assertion follows directly from $\G$-equivariance of~$F$.
\end{proof}

This implies in particular that the isotropy of the angular frequency vectors are conjugate if weak chimeras are related by symmetry. If $\g\in\Sg(A)$ then $\Om(\g A)=\Om(A)$, that is, the angular frequency vectors (and therefore the isotropy) are identical.

\subsection{Symmetries imply frequency synchronization}
\label{sec:SymImplSync}
Intuitively speaking, a weak chimera~$A$ consists of solutions of~\eqref{eq:CmplxDyn} along which the average angular frequencies have some symmetries but not too many. Inclusion~\eqref{eq:IsotropySubset} implies
\begin{equation}
\label{eq:Inclusion}
\TG\subset T(A)\subset\Sg(A)\subset\G_{\Om(A)}\subset\Gamma\subset \Sn.
\end{equation}
Consequently, if a solution has nontrivial instantaneous symmetry then the corresponding angular frequency vector has nontrivial isotropy. Similarly, the angular frequency vector of dynamically invariant sets with nontrivial setwise symmetry has nontrivial isotropy. For invariant sets with nontrivial (setwise or instantaneous) symmetry,~\eqref{eq:Inclusion} implies that one condition of Definition~\ref{def:SymWeakChimera} is automatically satisfied. In that sense the presence of symmetries ``facilitates'' the occurrence of weak chimera states. 

More generally speaking, symmetries of the system give sufficient conditions for angular frequency synchronization~\cite{Golubitsky2006b}. These are not necessary as there may be other dynamically invariant subspaces where oscillators are phase and frequency locked which are not induced by symmetry but rather by balanced polydiagonals of colored graphs~\cite{Antoneli2006}.

\section{Weak Chimeras for Networks of Phase Oscillators}
\label{sec:WeakChimerasPhaseOsc}
Definition~\ref{def:SymWeakChimera} relates to the original definition of a weak chimera for networks of coupled phase oscillators~\cite{Ashwin2014a}. We will not restrict ourselves to systems~\eqref{eq:COsc} but consider a more general setup that may include, for example, nonpairwise interactions~\cite{Ashwin2015a, Bick2016b}. More precisely, let $\X=\Torn$ and let $\G\subset\Sn$ act transitively on~$\Torn$ by permuting indices. A smooth $\G$-equivariant vector field $Y:\Torn\to\Rn$ now defines a $\G$-equivariant dynamical system
\begin{equation}
\label{eq:PhaseDyn}
\dot\vphi = Y(\vphi).
\end{equation}
that describes the evolution of~$\maxdim$ phase oscillators where the state of oscillator~$k$ is given by $\vphi_k\in\Tor$.

Write $z_k = \exp(i\vphi_k)$ and identify initial conditions~$\vphi^0\in\Torn$ with~$z^0\in\Csn$. The dynamics of~\eqref{eq:PhaseDyn} can be embedded in~$\Csn$ as
\begin{equation}
\label{eq:zPhase}
\dot z_k = z_k(iY_k(\vphi)).
\end{equation}
Therefore
\begin{equation}\label{eq:OmAv}
\Omega_k(\vphi^0) := K_{Y_k}(z^0) = \lim_{T\to\infty}\frac{1}{T}\int_0^T Y_k(\vphi(t))\ud t
\end{equation}
and if $A\subset\Torn$ is compact, dynamically invariant supporting an SRB measure then
\[\Om(\vphi^0) = K_Y(A)\] 
is the \emph{angular frequency vector} for~$A$. Moreover, with~\eqref{eq:PhaseDyn} we have
\begin{equation}
\int_{0}^{T}Y_k(\vphi(t))\ud t
= \int_{0}^{T}\dot\vphi_k(t)\ud t = 
\hat{\vphi}_k(T)-\hat{\vphi}_k(0)
\end{equation}
where $\hat{\vphi}$ is a continuous lift of the trajectory 
$\vphi(t)$ to~$\Rn$.
Thus, 
\[\Omega_k(\vphi^0) = \lim_{T\to\infty}\frac{\hat{\vphi}(T)}{T}\]
as given by~\eqref{eq:Omegak}. Note also that~$\Om_k(A)$ correspond to the average frequency defined in~\cite{Golubitsky2006b} and relates to the rotation vector for torus maps~\cite{Misiurewicz1989}.

Compared to the original definition of a weak chimera in~\cite{Ashwin2014a}, Definition~\ref{def:SymWeakChimera} is more restrictive. More precisely, for~$A$ we require that frequency synchronization is only relevant for a weak chimera if the oscillators are related by symmetry. By contrast, the original definition considers the set
\begin{equation}
\Theta(A) = \set{\gamma\in\Sn}{\gamma\Om(A)=\Om(A)}
\end{equation}
rather than the isotropy $\G_{\Om_k(A)}$. Note that~$\Theta(A)$ may be strictly larger than~$\G_{\Om(A)}$. For example if~$\Zn = \Z/\maxdim\Z\subset\Sn$ denotes the cyclic group and~$X$ is~$\Zn$ equivariant but not $\Sn$-equivariant and $\vphi^0_1=\dotsb=\vphi^0_\maxdim$ (for example a nonlocally coupled ring of phase oscillators~\cite{Kuramoto2002}) is a solution of $\dot\vphi= X(\vphi)$ then $\Theta\big(\sset{\vphi^0}\big) = \Sn\supsetneq\Zn$.


\section{Persistence of Weak Chimeras without Symmetry on Average for Diffusively Coupled Phase Oscillators}
\label{sec:WeakChimNoSym}

The inclusions~\eqref{eq:Inclusion} in Section~\ref{sec:SymImplSync} imply that any (nontrivial) instantaneous or average symmetry of a dynamically invariant set gives nontrivial isotropy of the angular frequency vector. This is the case for the weak chimeras constructed in~\cite{Ashwin2014a, Panaggio2015b, Bick2015c}. In this section we construct weak chimeras with trivial average symmetries for systems consisting of two weakly interacting populations of phase oscillators.

\subsection{Coupling function separability for symmetric diffusively coupled phase oscillators}
For $\phi, \psi\in\Torn$ define $X=(X_1, \dotsc, X_\maxdim)$ by
\begin{equation}
\label{eq:Y}
X_k(\phi, \psi) := \frac{1}{\maxdim}\sum_{j=1}^{\maxdim}g(\phi_k-\psi_j).
\end{equation}
The dynamics of a fully symmetric network of~$\maxdim$ phase oscillators with coupling function~$g$ is given by the $\Sn$-equivariant dynamical system on~$\Torn$ where 
\begin{equation}\label{eq:OscVF}
\dot\vphi_k  = Y_k(\vphi) := X_k(\vphi, \vphi)
\end{equation}
describes the evolution of the~$k$th oscillator\footnote{We obtain~\eqref{eq:OscVF} by setting $H_{kj}=1$ for all $k,j$ in~\eqref{eq:COsc}.}. We may assume~$g(0)=0$ by going to suitable co-rotating reference frame, $\vphi_k \mapsto \vphi_k - {\omega}t$. If the choice of coupling function~$g$ is important we write $Y^{(g)}$ or $X^{(g)}$ to highlight the dependency. Reducing the continuous~$\Tor$ symmetry of~\eqref{eq:OscVF} allows to set $\vphi_1=0$. Because of the~$\Sn$-equivariance, the \emph{canonical invariant region} 
\begin{equation}
\CIR:=\set{\vphi\in\Torn}{0=\vphi_1<\dotsb<\vphi_\maxdim < 2\pi}
\end{equation}
is dynamically invariant. It is bounded by hypersurfaces corresponding to cluster states with $\vphi_k=\vphi_{k+1}$ and there is a residual~$\Zn$ symmetry on~$\CIR$~\cite{Ashwin1992, Ashwin2016}.

For a compact flow invariant set~$A\subset\Torn$ define
\begin{equation}
\Xi(A) = \bigcup_{k\neq j}\set{\vphi_k-\vphi_j}{\vphi\in A}.
\end{equation}
Note that $\Xi(\g A)=\Xi(A)$ for all $\g\in\Sn$.

\begin{defn}\label{def:CouplFuncSep}
Two sets $A_1, A_2\subset\CIR$ are \emph{coupling function separated} if there are open intervals $Q_{A_1}, Q_{A_2}\subset \Tor$ with $\Xi(A_\ell)\subset Q_{A_\ell}$, $\ell=1,2$, and 
\[\overline{Q}_{A_1}\cap\overline{Q}_{A_2}=\emptyset\]
where the bar denotes topological closure.
\end{defn}

For $Q\subset\Tor$ define $\Xi^{-1}(Q):=\set{\vphi\in\Tor}{\Xi(\sset{\vphi})\subset Q}$ and
\[W^{(g)}(Q) := \Big[\min_{k\in\sset{1, \dotsc, \maxdim}}\inf_{\vphi\in \Xi^{-1}(Q)} Y_k^{(g)}(\vphi), \max_{k\in\sset{1, \dotsc, \maxdim}}\sup_{\vphi\in \Xi^{-1}(Q)} Y_k^{(g)}(\vphi)\Big].\]

\begin{lem}\label{lem:CplngFuncSep}
\begin{enumerate}
\item\label{lem:CFSone}
If $A\subset\CIR$ with $\Xi(A)\subset Q$ is dynamically invariant for the dynamics of~\eqref{eq:OscVF} with coupling function~$g$ then $\Om_k(\vphi^0)\in W^{(g)}(Q)$.
\item\label{lem:CFStwo}
Suppose that $A_\ell\subset\CIR$, $\ell=1,2$, are compact and coupling function separated with separating sets $Q_{A_\ell}$. Then for any $\eta\geq 0$ we can find a coupling function~$\gh$ such that
\[B_\eta\left(W^{(\gh)}(Q_{A_1})\right)\cap B_\eta\left(W^{(\gh)}(Q_{A_2})\right)=\emptyset.\]
\item\label{lem:CFSthree}
Let $A_\ell$ be as above and let $A_1',A_2'\subset\CIR$ be dynamically invariant for the dynamics of~\eqref{eq:OscVF} with $\Xi(A_\ell')\subset Q_{A_\ell}$. Then there is a coupling function~$\gh$ such that $A_1',A_2'$ are dynamically invariant for the dynamics of~\eqref{eq:OscVF} with~$\gh$ and
\[\Om_k\big(\vphi^0_{A_1'}\big)\neq\Om_j\big(\vphi^0_{A_2'}\big)\]
for all $k,j$ and $\vphi^0_{A_\ell'}\in A_\ell'$.
\end{enumerate}
\end{lem}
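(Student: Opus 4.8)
The plan is to prove the three parts in order, exploiting throughout that whenever a trajectory stays in a set all of whose pairwise phase differences lie in an interval $Q$, its instantaneous velocities are trapped in $W^{(g)}(Q)$. For part~\ref{lem:CFSone} I would first observe that $\Xi(A)\subset Q$ is exactly the statement that every point of $A$ has all of its pairwise differences in $Q$, i.e.\ $A\subset\Xi^{-1}(Q)$. Dynamical invariance then keeps the trajectory $\vphi(t)$ through any $\vphi^0\in A$ inside $A\subset\Xi^{-1}(Q)$ for all $t\geq0$, so by the definition of $W^{(g)}(Q)$ we have $Y_k^{(g)}(\vphi(t))\in W^{(g)}(Q)$ for every $k$ and every $t$. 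Since $g$ is continuous on the compact $\Torn$, this interval is closed and bounded, whence each time average $\frac1T\int_0^T Y_k^{(g)}(\vphi(t))\,\ud t$ lies in it and so does the limit $\Omega_k(\vphi^0)$.

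For part~\ref{lem:CFStwo} the idea is to collapse each $W^{(\gh)}(Q_{A_\ell})$ to a point. I would take $\gh$ to be constant equal to $c_\ell$ on $\overline{Q}_{A_\ell}$; because the two closures are disjoint a smooth bump (partition-of-unity) construction yields such a $\gh$ with $\gh(0)=0$, the sole constraint being that if $0\in\overline{Q}_{A_\ell}$ then continuity forces $c_\ell=0$---and at most one closure can contain $0$. For $\vphi\in\Xi^{-1}(Q_{A_\ell})$ every off-diagonal difference lies in $Q_{A_\ell}$ while the diagonal contributes $\gh(0)=0$, so $Y_k^{(\gh)}(\vphi)=\frac{\maxdim-1}{\maxdim}c_\ell$ independently of $\vphi$ and $k$; hence $W^{(\gh)}(Q_{A_\ell})=\tsset{\frac{\maxdim-1}{\maxdim}c_\ell}$. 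As at least one of $c_1,c_2$ is free, choosing them with $\frac{\maxdim-1}{\maxdim}\abs{c_1-c_2}>2\eta$ separates the two $\eta$-neighborhoods.

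Part~\ref{lem:CFSthree} is the crux, and the main obstacle is to separate the frequency bands of $A_1'$ and $A_2'$ \emph{without} destroying their dynamical invariance: the recipe of part~\ref{lem:CFStwo} is useless here because forcing $\gh$ constant on $Q_{A_\ell}$ turns the dynamics on $A_\ell'$ into a rigid rotation, which need not preserve $A_\ell'$. The remedy is to perturb $g$ only by a rotation. I would set $\gh=g+h$ with $h$ smooth, $h(0)=0$, and $h\equiv s_\ell$ on $\overline{Q}_{A_\ell}$ (same $0$-bookkeeping as before). Since for $\vphi$ near $A_\ell'$ all differences $\vphi_a-\vphi_b$, $a\neq b$, lie in $\Xi(A_\ell')\subset Q_{A_\ell}$ where $h$ is constant, a short computation shows that the added constant $\frac{\maxdim-1}{\maxdim}s_\ell$ enters every component of $Y^{(\gh)}$ equally and therefore cancels in all phase differences $Y_k^{(\gh)}-Y_j^{(\gh)}$; thus on a neighborhood of $A_\ell'$ the phase-difference dynamics for $\gh$ and for $g$ coincide and $A_\ell'$ stays dynamically invariant. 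On $A_\ell'$ itself $Y_k^{(\gh)}=Y_k^{(g)}+\frac{\maxdim-1}{\maxdim}s_\ell$, so by part~\ref{lem:CFSone} every frequency $\Omega_k(\vphi^0_{A_\ell'})$ lies in the band $W^{(g)}(Q_{A_\ell})$ rigidly translated by $\frac{\maxdim-1}{\maxdim}s_\ell$. These bands are bounded, so I would finally keep the shift zero on whichever closure meets $0$ and push the other band by a shift exceeding the total width plus the gap, making the two translated bands disjoint and giving $\Omega_k(\vphi^0_{A_1'})\neq\Omega_j(\vphi^0_{A_2'})$ for all $k,j$ and all admissible initial conditions.
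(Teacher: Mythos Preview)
Your argument is correct, and part~(\ref{lem:CFSone}) matches the paper exactly. The interesting difference is in how you organise parts~(\ref{lem:CFStwo}) and~(\ref{lem:CFSthree}).

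The paper proves~(\ref{lem:CFStwo}) with the \emph{additive shift} construction you only bring in for~(\ref{lem:CFSthree}): it sets $\gh=g+a_\ell$ on $Q_{A_\ell}$, so that $W^{(\gh)}(Q_{A_\ell})$ is just $W^{(g)}(Q_{A_\ell})$ translated by~$a_\ell$ (up to the $\frac{\maxdim-1}{\maxdim}$ factor from the diagonal term, which you track more carefully than the paper does). Because this shift leaves the phase-difference vector field untouched, it automatically preserves dynamical invariance, and the paper then obtains~(\ref{lem:CFSthree}) in one line from~(\ref{lem:CFSone}) and~(\ref{lem:CFStwo}) with $\eta=0$.

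You instead use a different, simpler construction for~(\ref{lem:CFStwo})---forcing $\gh$ to be \emph{constant} on each $Q_{A_\ell}$, which collapses $W^{(\gh)}(Q_{A_\ell})$ to a point. This is perfectly valid for~(\ref{lem:CFStwo}), but as you correctly diagnose, it destroys invariance and is useless for~(\ref{lem:CFSthree}). You then independently develop the shift construction for~(\ref{lem:CFSthree}), which is exactly the paper's construction for~(\ref{lem:CFStwo}). So the net effect is that you build two gadgets where the paper builds one; the paper's route is more economical because a single construction serves both claims, but your route has the minor advantage of being more explicit about the $j=k$ term, the constraint $\gh(0)=0$, and why the phase-difference dynamics (and hence invariance of sets in $\CIR$) are unaffected by the shift.
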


\begin{proof}
To prove (\ref{lem:CFSone}) note first that invariance of~$\CIR$ implies that $\Om_j(\vphi^0)=\Om_k(\vphi^0)$ for all $k,j$ and $\vphi^0\in A$. Standard integral estimate for~\eqref{eq:OmAv} yield
\[\Om_k(\vphi^0)\in \Big[\inf_{\vphi\in A} Y_k^{(g)}(\vphi), \sup_{\vphi\in A} Y_k^{(g)}(\vphi)\Big] \subset W^{(g)}(Q).\]

To prove (\ref{lem:CFStwo}) consider coupling functions~$\gh$ with $\gh(\phi)=g(\phi)+a_\ell$ for all $\phi\in Q_{A_\ell}$, $\ell=1,2$. Since \[Y_k^{(\gh)}(\vphi) = \frac{1}{\maxdim}\sum_{j=1}^{\maxdim}(g(\phi_k-\psi_j)+a_\ell) = a_\ell + Y_k^{(g)}(\vphi)\] for all $\vphi$ with $\Xi(\sset{\vphi})\subset Q_{A_\ell}$ we have 
\[W^{(\gh)}(Q_{A_\ell}) = \left[\inf W^{(g)}(Q_{A_\ell}) + a_\ell, \sup W^{(g)}(Q_{A_\ell}) + a_\ell\right].\]
For a given $\eta\geq0$ choose $a_1, a_2$ such that
\[B_\eta\left(W^{(g)}(Q_{A_1})\right)\cap B_\eta\left(W^{(g)}(Q_{A_2})\right)=\emptyset\]
to obtain the desired coupling function~$\gh$.

Note that replacing $g$ by~$\gh$ as above preserves dynamically invariant sets~$A$ with $\Xi(A)\subset Q_{A_\ell}$. Claim~(\ref{lem:CFSthree}) now follows from (\ref{lem:CFSone}) and (\ref{lem:CFStwo}) with $\eta=0$.\end{proof}

\begin{rem}
The notion of function coupling separability and Lemma~\ref{lem:CplngFuncSep} generalize to a finite number of sets $A_1, \dotsc, A_r$. The function $g-\gh$ can typically be chosen to be~$\Cinf$.
\end{rem}

\subsection{Relative equilibria with trivial symmetry}
We now show that choosing the coupling function appropriately in an arbitrarily small neighborhood of zero gives rise to asymptotically stable relative equilibria with trivial symmetry for~\eqref{eq:OscVF}.

Let $0=\alpha_1 <\dotsb <\alpha_\maxdim<2\pi$. The function
\begin{equation}
\label{eq:RelEq}
\vphi^\star(t)=(\alpha_1+t{\omega^\star}, \dotsc, \alpha_\maxdim+t{\omega^\star})\in\CIR
\end{equation}
with $\omega^\star = \frac{1}{n}\sum_{j=2}^\maxdim g(-\alpha_j)$ is a relative equilibrium of~\eqref{eq:OscVF} for any coupling function~$g$ such that
\begin{equation}
\label{eq:RelEqExistence}
\frac{1}{\maxdim}\sum_{j\neq k}\left(g(\alpha_k-\alpha_j)-g(-\alpha_j)\right) = 0
\end{equation}
for all $k=2, \dotsc, \maxdim$. For a relative
equilibrium we have 
\begin{equation}
\Xi(\sset{\vphi^\star})=\bigcup_{k\neq j}\sset{\alpha_k-\alpha_j}\subset [-2\alpha_{\maxdim}, 2\alpha_{\maxdim}]\subset \Tor.
\end{equation}
In particular, we have a relative equilibrium if the coupling function~$g$ vanishes on $\Xi(\sset{\vphi^\star})$. Since~$\alpha_n$ can be chosen arbitrarily small, the relative equilibrium can be chosen arbitrarily close to the fully synchronized solution $\vphi_1=\dotsb=\vphi_\maxdim$. We have $\Om(\sset{\vphi^\star}) = (\omega^\star, \dotsc, \omega^\star)$.

Stability of the relative equilibrium is determined by the linearization
\begin{equation}
\frac{\partial Y_k}{\partial\vphi_j} = \begin{cases}
\frac{1}{n}\sum_{l\neq k}g'(\alpha_k-\alpha_l) & \text{if } k=j,\\
-\frac{1}{n}g'(\alpha_k-\alpha_j) & \text{otherwise.}
\end{cases}
\end{equation}
By choosing the coupling function appropriately on~$\Xi(A)$ the relative equilibrium will be asymptotically stable. For example, if $g'(\phi)=0$ for $\phi<0$ and $g'(\phi)<0$ for $\phi>0$ we have a lower triangular matrix with negative values on the diagonal (apart from one zero eigenvalue) implying that~$\vphi^\star$ is asymptotically stable.

\begin{lem}\label{lem:EqInC}
Let $\vphi^\star(t)$ be a relative equilibrium of~\eqref{eq:OscVF} 
as defined in~\eqref{eq:RelEq}.
If~$\abs{\alpha_\maxdim}<\frac{\pi}{2}$ then
$T(\sset{\vphi^\star})=\Sg(\sset{\vphi^\star})=\TG$.
\end{lem}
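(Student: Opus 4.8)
The plan is to establish the single equality $\Sg(\sset{\vphi^\star})=\TG$; since the general inclusion~\eqref{eq:Inclusion} gives $\TG\subset T(\sset{\vphi^\star})\subset\Sg(\sset{\vphi^\star})$, the corresponding statement for $T$ then follows at once. I view $\sset{\vphi^\star}$ as the image $A=\set{\vphi^\star(t)}{t\in\R}\subset\Torn$ of the relative equilibrium. Because all coordinates share the common drift $\omega^\star$, this image is the orbit $A=\set{(\alpha_1+s,\dotsc,\alpha_\maxdim+s)}{s\in\R}$ of the diagonal $\Tor$-action through $\alpha=(\alpha_1,\dotsc,\alpha_\maxdim)$, with all arithmetic understood mod $2\pi$. (If $\omega^\star=0$ then $A$ is the single point $\alpha$ and the claim is immediate from distinctness of the $\alpha_k$, so I may assume $A$ is this circle.)

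First I would translate setwise invariance into a condition on the points $\alpha_k$ alone. Comparing $\g\vphi^\star(t)$ with a point $\vphi^\star(t')$ on the orbit, one sees that $\g\in\Sn$ satisfies $\g A=A$ if and only if there is a single shift $s=s(\g)\in\Tor$ with
\[\alpha_{\g^{-1}(k)}\equiv\alpha_k+s\pmod{2\pi}\qquad\text{for all }k.\]
Equivalently, the finite set $\Lambda=\sset{\alpha_1,\dotsc,\alpha_\maxdim}\subset\Tor$ is invariant under the rotation $\rho_s$ by $s$, and $\g$ is the permutation of indices that $\rho_s$ induces on $\Lambda$. Thus it suffices to show that the only rotation preserving $\Lambda$ is the identity.

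The heart of the argument, and the step where the hypothesis $\abs{\alpha_\maxdim}<\frac{\pi}{2}$ enters, is a gap estimate. Because $0=\alpha_1<\dotsb<\alpha_\maxdim<\frac{\pi}{2}$, the $\maxdim-1$ consecutive gaps of $\Lambda$ lie inside $[0,\alpha_\maxdim]$ and hence sum to $\alpha_\maxdim<\frac{\pi}{2}$, so each is shorter than $\pi$, whereas the remaining wrap-around gap from $\alpha_\maxdim$ back to $\alpha_1$ has length $2\pi-\alpha_\maxdim>\frac{3\pi}{2}>\pi$. Since $\rho_s$ is an isometry fixing $\Lambda$ setwise, it permutes the complementary gaps while preserving their lengths, and as the wrap-around gap is the \emph{unique} gap longer than $\pi$ it must be mapped to itself; an orientation-preserving rotation carrying a proper arc onto itself is the identity, so $s\equiv 0$. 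With $s\equiv 0$ the displayed relation reduces to $\alpha_{\g^{-1}(k)}=\alpha_k$ for all $k$, and distinctness of the $\alpha_k$ forces $\g=\id$. Hence $\Sg(\sset{\vphi^\star})=\TG$, and therefore also $T(\sset{\vphi^\star})=\TG$. The only genuine obstacle is this last step---recognizing that the hypothesis makes the complementary arc the unique longest gap, which is exactly what rules out nontrivial rotational symmetry of $\Lambda$ (note that $\abs{\alpha_\maxdim}<\pi$ would already suffice here).
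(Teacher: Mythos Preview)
Your proof is correct and takes a genuinely different route from the paper's. Both proofs reduce to showing $\Sg(\sset{\vphi^\star})=\TG$ and both translate setwise invariance into the existence of a common shift $s$ with $\alpha_{\sigma(k)}\equiv\alpha_k+s\pmod{2\pi}$. From there the arguments diverge. The paper argues algebraically: since a nontrivial permutation must move some index up and some index down, one obtains two real differences of opposite sign that are nevertheless congruent modulo $2\pi$; their difference would then have to be a nonzero multiple of $2\pi$, which is impossible because it is bounded by $4\alpha_\maxdim<2\pi$. You instead argue geometrically via the gap structure of $\Lambda=\sset{\alpha_1,\dotsc,\alpha_\maxdim}$ on the circle: the wrap-around gap of length $2\pi-\alpha_\maxdim$ strictly exceeds every other gap, so any rotation preserving $\Lambda$ must fix that arc and hence be the identity.

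Your approach is arguably cleaner conceptually---it isolates exactly the obstruction to rotational symmetry---and your closing remark that $\alpha_\maxdim<\pi$ already suffices is correct (one only needs $2\pi-\alpha_\maxdim$ to exceed the other gaps, each of which is at most $\alpha_\maxdim$). The paper's algebraic argument could be sharpened to the same bound, since the relevant quantity actually lies in $(0,2\alpha_\maxdim]$ rather than merely being bounded by $4\alpha_\maxdim$, but as written it uses the cruder estimate. Either way, the two arguments are equivalent in strength and differ mainly in style: the paper's is a short direct contradiction, yours makes the underlying rigidity of the point configuration explicit.
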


\begin{proof}
It suffices to show that $\Sg(\sset{\vphi^\star})=\TG$.
Assume that $\g\in\Sg(\sset{\vphi^\star})$ 
with $\g\neq\id$.
Then there exists a $\tau\geq 0$ such that
\[\alpha_{\g k} = \alpha_{k}+\tau\omega^\star \mod 2\pi\]
for all~$k$. Recall that $0=\alpha_1 <\dotsb <\alpha_\maxdim<2\pi$. Since $\gamma\neq\id$, $\gamma$ permutes some indices. Assume without loss of generality that $\alpha_{\g 1}>\alpha_1$ and $\alpha_{\g 2}<\alpha_2$. We have $\alpha_{\g 1}-\alpha_1 = \alpha_{\g 2}-\alpha_2 = \tau\omega^\star\mod 2\pi$. But since $\alpha_{\g 1}-\alpha_1>0$ and $\alpha_{\g 2}-\alpha_2<0$ there has to be an $m>0$ such that $\alpha_{\g 1}-\alpha_1 -\alpha_{\g 2}+\alpha_2 = 2m\pi$. This is a contradiction since $\abs{\alpha_{\g 1}-\alpha_1 - \alpha_{\g 2}+\alpha_2}\leq 4\alpha_\maxdim<2\pi$.
\end{proof}

\subsection{Weak chimeras with $\Sg(A)=\TG$ in weakly coupled populations of phase oscillators}
Chaotic weak chimeras have many features associated with classical chimera states including positive maximal Lyapunov exponents. Hence, rather than using a hyperbolicity argument to construct nonchaotic weak chimeras as in~\cite{Ashwin2014a}, we aim to construct weak chimeras with $\Sg(A)=\TG$ in a more general setup which allows for positive maximal Lyapunov exponents. To this end, we extend recent results from~\cite{Bick2015c} with respect to the instantaneous and setwise symmetries of the constructed sets.

Coupling two populations of~$\maxdim$ oscillators, whose uncoupled dynamics are given by~\eqref{eq:OscVF}, defines a dynamical system on $\Tornn$. More explicitly,
write $\vphi = (\vphi_{1}, \vphi_{2})\in\Torn\times\Torn=\Tornn$, $\vphi_{\ell}=(\vphi_{\ell,1}, \dotsc, \vphi_{\ell,\maxdim})$ and consider the product system
\begin{equation}\label{eq:ProductDyn}
\begin{aligned}
\dot\vphi_{1} &= \F_1^{(g,\e)}(\vphi_{1}, \vphi_2) = Y^{(g)}(\vphi_{1}) + \e X^{(g)}(\vphi_{1},\vphi_{2}),\\
\dot\vphi_{2} &= \F_2^{(g,\e)}(\vphi_{1}, \vphi_2) = Y^{(g)}(\vphi_{2}) + \e X^{(g)}(\vphi_{2},\vphi_{1}),
\end{aligned}
\end{equation}
with $Y^{(g)}, X^{(g)}$ as in~\eqref{eq:OscVF},~\eqref{eq:Y}. Observe that for $\e=0$ the system decouples into two identical groups of~$n$ oscillators---both of which with nontrivial dynamics~\eqref{eq:OscVF}. For $\vphi^0\in\Tornn$ we denote the asymptotic angular frequency of the oscillator with phase $\vphi_{\ell,k}$ by $\Om_{\ell,k}(\vphi^0)=\Om_{\ell,k}^{(g,\e)}(\vphi^0)$.

Let $\Gamma = \Sn\wr\Sk{2}$ where~$\wr$ is the wreath product. The system~\eqref{eq:ProductDyn} is $\G$-equivariant~\cite{Dionne1996a}; we have $\G = \Sn\wr\Sk{2}=(\Sn)^2\rtimes\Sk{2}$ where the elements of~$\Sn$ permute the oscillators within each group of~$n$ oscillators and the action of~$\Sk{2}$ permutes the two groups. Observe that this is only a semidirect product~$\rtimes$ as the two sets of permutations do not necessarily commute. The oscillators are indistinguishable as this group acts transitively on the oscillators. 

Weak chimeras in the product system persist for weak coupling~$0\leq\e\ll 1$. As in~\cite{Bick2015c}, we call a dynamically invariant set~$A$ is \emph{sufficiently stable} if is there is an open neighborhood of~$A$ on which a Lyapunov function is defined. The persistence theorem for weak chimeras~\cite[Theorem~4]{Bick2015c} generalizes to coupling function separated sets that are sufficiently stable.

\begin{thm}
\label{thm:CWCns}
Suppose that~$g$ is a coupling function such that $A_1, A_2\subset\CIR$ are compact, forward invariant, coupling function separated and sufficiently stable sets for the dynamics of~\eqref{eq:OscVF} with~$Y^{(g)}$. Then for any sufficiently small $\delta>0$ there exist a smooth coupling function~$\gh$ and $\e_0>0$ such that for any $0\leq\e<\e_0$ the weakly coupled product system~\eqref{eq:ProductDyn} with~$g$ replaced by~$\gh$ has a sufficiently stable weak chimera~$A^{(\e)}$ with $A^{(\e)} \subset B_\delta(A_1\times A_2)$.
\end{thm}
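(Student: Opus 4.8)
The plan is to combine the coupling-function engineering of Lemma~\ref{lem:CplngFuncSep} with the persistence machinery of~\cite[Theorem~4]{Bick2015c}, and then to read off the isotropy of the angular frequency vector from two independent mechanisms. First I would treat the unperturbed ($\e=0$) product system~\eqref{eq:ProductDyn}, which decouples into two copies of~\eqref{eq:OscVF}; hence $A_1\times A_2\subset\Tornn$ is compact and forward invariant, and if $V_1,V_2$ are Lyapunov functions witnessing sufficient stability of $A_1,A_2$ then $V(\vphi_1,\vphi_2)=V_1(\vphi_1)+V_2(\vphi_2)$ witnesses sufficient stability of $A_1\times A_2$. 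Next I would invoke Lemma~\ref{lem:CplngFuncSep}(\ref{lem:CFStwo})--(\ref{lem:CFSthree}) to replace $g$ by a smooth $\gh$ that keeps $A_1,A_2$ dynamically invariant while, for some fixed $\eta>0$, separating the frequency windows $B_\eta(W^{(\gh)}(Q_{A_1}))\cap B_\eta(W^{(\gh)}(Q_{A_2}))=\emptyset$. Since this modification only adds locally constant shifts on $Q_{A_1},Q_{A_2}$, it leaves $g'$ unchanged on each $\Xi(A_\ell)$, so the linearizations and the $V_\ell$ are preserved and $A_1,A_2$ (hence $A_1\times A_2$) remain sufficiently stable.

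For the persistence step I would fix a sublevel set of $V$ that is a neighborhood of $A_1\times A_2$ contained in $B_\delta(A_1\times A_2)$. On this compact region the extra term $\e X$ perturbs $\dot V$ by $O(\e)$, so for $\e<\e_0$ with $\e_0$ small the sublevel set remains a trapping region for~\eqref{eq:ProductDyn} with $\gh$. The argument of~\cite[Theorem~4]{Bick2015c} then applies to the coupling-function-separated, sufficiently stable set $A_1\times A_2$ and produces a compact, dynamically invariant, sufficiently stable set inside $B_\delta(A_1\times A_2)$; taking a chain-transitive component of the maximal invariant set in the trapping region gives the connected, chain-recurrent $A^{(\e)}$ required by Definition~\ref{def:SymWeakChimera}.

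It then remains to identify $\Gamma_{\Om(\vphi^0)}$ for $\vphi^0\in A^{(\e)}$, which is forced by two separate effects. \emph{Within-population synchronization}: because each $A_\ell$ is a compact subset of the open region $\CIR$, for $\delta$ small every configuration in $B_\delta(A_1\times A_2)$ keeps the differences $\vphi_{\ell,k}-\vphi_{\ell,j}$ bounded away from $0$ and $\pm2\pi$, so no two oscillators of the same population ever cross; the lifted differences stay bounded and $\Om_{\ell,1}=\dotsb=\Om_{\ell,\maxdim}=:\omega_\ell$. I would emphasize that this synchronization is a consequence of the invariant ordering, \emph{not} of any symmetry of $A^{(\e)}$. \emph{Between-population separation}: for $\vphi^0\in A^{(\e)}$ the first term of each $\Om_{\ell,k}$ lies within $O(\delta)$ of $W^{(\gh)}(Q_{A_\ell})$ while the coupling contributes at most $O(\e)$, so shrinking $\delta,\e_0$ gives $\omega_\ell\in B_\eta(W^{(\gh)}(Q_{A_\ell}))$, and disjointness yields $\omega_1\neq\omega_2$. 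Consequently $\Gamma_{\Om(\vphi^0)}=(\Sn)^2$, so $\TG\subsetneq(\Sn)^2\subsetneq\Sn\wr\Sk{2}=\G$ and $A^{(\e)}$ is a weak chimera.

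I expect the persistence step to be the main obstacle: unlike the frequency estimates, securing the topological requirements (connectedness and chain-recurrence) of the surviving set under the weak-coupling perturbation is exactly the delicate content generalized from~\cite[Theorem~4]{Bick2015c}, and I would lean on that argument rather than reprove it. A secondary point is to choose a single range of $\delta$ and $\e_0$ on which both isotropy mechanisms hold simultaneously; and should trivial setwise symmetry also be wanted (e.g.\ for the relative equilibria of Lemma~\ref{lem:EqInC}), one would additionally invoke Corollary~\ref{cor:NoSymPerturb} to pass from $\Sg(A_1\times A_2)=\TG$ to $\Sg(A^{(\e)})=\TG$.
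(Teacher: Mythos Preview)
Your proposal is correct and follows essentially the same approach as the paper: modify $g$ via Lemma~\ref{lem:CplngFuncSep} to separate the frequency windows, invoke the persistence argument from~\cite{Bick2015c} to obtain $A^{(\e)}\subset B_\delta(A_1\times A_2)$, then use the preserved phase ordering in~$\CIR^2$ for within-population frequency synchrony and the integral estimate on the $\e X$ term for between-population frequency separation. The only bookkeeping difference is that the paper fixes $\eta=1$ at the outset and derives $\e_1=M(\gh)^{-1}$ explicitly (so that $\e M(\gh)<1=\eta$), whereas you leave $\eta$ and the $O(\e)$, $O(\delta)$ errors as unspecified constants to be shrunk jointly; both routes arrive at the same conclusion.
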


\begin{proof}
First consider the coupling function separated sets $A_1, A_2\subset\Torn$ as dynamically invariant sets for~\eqref{eq:OscVF}, a factor of the uncoupled system. Suppose that $Q_{A_1}, Q_{A_2}$ are the separating sets and for a coupling function $\gt$ define $M(\gt) := \max_{(\vphi_1, \vphi_2)\in\Tornn}\tabs{X^{(\gt)}(\vphi_1, \vphi_2)}<\infty$. Now choose a coupling function~$\gh$ according to Lemma~\ref{lem:CplngFuncSep}(\ref{lem:CFStwo}) for $\eta=1$ and fix~$\e_1:=M(\gh)^{-1}$. For any $0\leq\e<\e_1$ we have
\begin{equation}\label{eq:Sep}B_{\e M(\gh)}(W^{(\gh)}(Q_{A_1}))\cap B_{\e M(\gh)}(W^{(\gh)}(Q_{A_2}))=\emptyset.\end{equation}
since $\e M(\gh)<\e_1 M(\gh)=1$.

Now consider the product system~\eqref{eq:ProductDyn}. For any sufficiently small $\delta>0$ we obtain $\e_2>0$ and compact invariant sets~$A^{(\e)}\subset B_\delta(A_1\times A_2)$ for all $0\leq\e<\e_2$ as in~\cite{Bick2015c}. Set $\e_0 < \min\sset{\e_1, \e_2}$. By restricting $\delta$ appropriately, the sets~$A^{(\e)}$ are weak chimeras.

To show that $\G_{\Om(\vphi^0)}\neq\TG$, assume that~$\delta$ is so small that $B_\delta(A_1\times A_2)\subset\CIR^2$. This implies that the phase ordering within each population is preserved. Hence, for given $\ell=1,2$ we have
\[
\Om_{\ell,k}^{(\gh,\e)}(\vphi^0) = \Om_{\ell,j}^{(\gh,\e)}(\vphi^0) =: \Om_{\ell,*}^{(\gh,\e)}(\vphi^0)
\]
for all $k,j$ and any $\vphi^0\in A^{\e}$. Thus $\G_{\Om(\vphi^0)}\neq\TG$.

It remains to be shown that $\G_{\Om(\vphi^0)}\neq\G$. Let $A_\ell^{(\e)}$ denote the projection of~$A^{(\e)}$ onto $\vphi_\ell$. Now assume that~$\delta$ is sufficiently small such that $A^{(\e)} \subset B_\delta(A_1\times A_2)$ implies that $\Xi\big(A_\ell^{(\e)}\big)\subset Q_{A_\ell}$ for all $0\leq\e<\e_0$. Since $\F_1^{(\gh,\e)}(\vphi_{1}, \vphi_2) = Y^{(\gh)}(\vphi_{1}) + \e X(\vphi_{1},\vphi_{2})$ integral estimates as in Lemma~\ref{lem:CplngFuncSep}(\ref{lem:CFSone}) imply that
\[\Om_{\ell,*}^{(\gh,\e)}\in B_{\e_0M}\big(W^{(\gh)}(Q_{A_\ell})\big)\]
for all $0\leq \e<\e_0$. By choice of~$\gh$, Equation~\eqref{eq:Sep} now implies $\Om_{1,*}^{(\gh,\e)}(\vphi^0)\neq\Om_{2,*}^{(\gh,\e)}(\vphi^0)$ for all $\vphi^0\in A^{(\e)}$. Thus $\G_{\Om(\vphi^0)}\neq\G$ and~$A^{(\e)}$ is a weak chimera.
\end{proof}

The following statement asserts that trivial symmetries in the
factors carry over to the product dynamics.

\begin{lem}
\label{lem:SymProd}
Let $A_1, A_2\subset \Torn$ be coupling function separated attractors 
for~\eqref{eq:OscVF} with $\Sg(A_1)=\Sg(A_2)=\TG$. Then $\Sg(A_1\times A_2)=\TG$ 
for the product system~\eqref{eq:ProductDyn}.
\end{lem}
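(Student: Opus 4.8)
The plan is to exploit the wreath product structure $\G = \Sn\wr\Sk{2} = (\Sn)^2\rtimes\Sk{2}$ and to split any candidate setwise symmetry according to its $\Sk{2}$-component. I would write a general element as $\g = ((\sigma_1,\sigma_2);\pi)$ with $\sigma_1,\sigma_2\in\Sn$ and $\pi\in\Sk{2}$, so that its action on $\Torn\times\Torn=\Tornn$ is $\g(\vphi_1,\vphi_2) = (\sigma_1\vphi_{\pi(1)}, \sigma_2\vphi_{\pi(2)})$ (the $\Sk{2}$-factor permuting the two populations, the $\Sn$-factors acting within each). Assume $\g\in\Sg(A_1\times A_2)$, i.e.\ $\g(A_1\times A_2)=A_1\times A_2$. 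Since $A_1,A_2$ are attractors they are nonempty, and as $\maxdim\geq 2$ the difference sets $\Xi(A_\ell)$ are nonempty as well; I then show each value of $\pi$ forces $\g=\id$.

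First I would dispatch the case $\pi=\id$. Here $\g(A_1\times A_2) = (\sigma_1 A_1)\times(\sigma_2 A_2)$, and equality of two nonempty product sets (seen by projecting onto each factor) yields $\sigma_1 A_1 = A_1$ and $\sigma_2 A_2 = A_2$. Hence $\sigma_\ell\in\Sg(A_\ell)=\TG$ for $\ell=1,2$, so $\sigma_1=\sigma_2=\id$ and $\g=\id$. This case is routine and is the only place where the trivial-symmetry hypothesis on the factors is used.

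The substantive case is $\pi$ equal to the transposition in $\Sk{2}$, the population swap. Then $\g(A_1\times A_2) = (\sigma_1 A_2)\times(\sigma_2 A_1)$, so membership in $\Sg(A_1\times A_2)$ forces the cross equalities $\sigma_1 A_2 = A_1$ and $\sigma_2 A_1 = A_2$. I would now invoke the permutation invariance $\Xi(\sigma A)=\Xi(A)$ for all $\sigma\in\Sn$ (recorded when $\Xi$ was introduced) to deduce $\Xi(A_1) = \Xi(\sigma_1 A_2) = \Xi(A_2)$. But coupling function separation (Definition~\ref{def:CouplFuncSep}) supplies open intervals $Q_{A_\ell}\supset\Xi(A_\ell)$ with $\overline{Q}_{A_1}\cap\overline{Q}_{A_2}=\emptyset$; the common nonempty set $\Xi(A_1)=\Xi(A_2)$ would then lie in both $Q_{A_1}$ and $Q_{A_2}$, contradicting the disjointness of their closures. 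Thus no swap element lies in $\Sg(A_1\times A_2)$, and combining the two cases gives $\Sg(A_1\times A_2)=\TG$.

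The only genuine obstacle is bookkeeping the wreath-product action correctly, so that the swap case really produces the cross equalities $\sigma_1 A_2 = A_1$ and $\sigma_2 A_1 = A_2$ rather than diagonal ones; since $\Sk{2}$ consists of involutions, left/right action conventions coincide and this causes no trouble. Once set up, the contradiction in the swap case is immediate from the $\Sn$-invariance of $\Xi$ together with the geometric separation of the intervals $Q_{A_\ell}$, and I note that the coupling-function-separation hypothesis carries the entire swap case while neither the attractor property (beyond nonemptiness) nor Proposition~\ref{prop:Symmetry} is needed.
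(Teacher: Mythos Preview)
Your argument is correct and follows the same structural decomposition as the paper's: split an element of $\Sn\wr\Sk{2}$ by its $\Sk{2}$-component, handle the non-swap case via $\Sg(A_\ell)=\TG$, and rule out the swap case using the $\Sn$-invariance of~$\Xi$ together with coupling function separation.

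The one tactical difference is worth recording. The paper establishes the stronger statement $\g(A_1\times A_2)\cap(A_1\times A_2)=\emptyset$ for every nontrivial~$\g$: in the non-swap case this requires the attractor dichotomy (Proposition~\ref{prop:Symmetry}) on each factor to turn $\Sg(A_\ell)=\TG$ into $\sigma A_\ell\cap A_\ell=\emptyset$, and in the swap case it shows $\G A_1\cap\G A_2=\emptyset$ in~$\Torn$. You instead work directly from the setwise equality $\g(A_1\times A_2)=A_1\times A_2$ and project to the factors; this yields $\sigma_\ell A_\ell=A_\ell$ (resp.\ $\sigma_1 A_2=A_1$) immediately, without invoking Proposition~\ref{prop:Symmetry}. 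So your route is slightly more economical---as you observe, the attractor hypothesis enters only through nonemptiness---while the paper's route yields the somewhat stronger disjointness conclusion as a byproduct.
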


\begin{proof}
Write $\Sk{2} = \sset{\id, \tau}$. For any $\gamma\in(\Sn)^2$ we have $(\g, \id)(A_1\times A_2)\cap(A_1\times A_2) = \emptyset$ in~$\Tornn$ by assumption. Since~$A_1$ and~$A_2$ are coupling function separated, we have $A_1\cap A_2=\emptyset$ in~$\Torn$. Write $\Gamma V = \bigcup_{\g\in\G}\gamma V$, $V\in\sset{A_1,A_2}$. The fact that $\Xi(\g A_1)=\Xi(A_1)$ implies $\G A_1\cap \G A_2=\emptyset$ in~$\Torn$. Since $(\g, \tau)(A_1\times A_2)\subset\G A_2\times\G A_1$ for any $\gamma\in(\Sn)^2$ we have $(\g, \tau)(A_1\times A_2)\cap(A_1\times A_2)=\emptyset$ and by Proposition~\ref{prop:Symmetry} the claim follows.
\end{proof}

Combining the perturbation result Theorem~\ref{thm:CWCns} with the symmetry considerations, we can now state the main theorem of this section. In order to apply Theorem~\ref{thm:CWCns} we make a slightly stronger assumption concerning stability of the relative periodic orbits.

\begin{thm}
\label{thm:ChaoticWeakChimNoSym}
Suppose that~$g$ is a coupling function such that for the~$\Sn$-equivariant dynamics of~\eqref{eq:OscVF} with~$Y^{(g)}$ the set $\Ac=\sset{\vphi^{\star}}$ is a sufficiently stable relative equilibrium and~$\Ai\subset\Torn$ is sufficiently stable attractor that are coupling function separated and $\Sg(\Ac)=\Sg(\Ai)=\TG$. Then for any sufficiently small $\delta>0$ there is a coupling function~$\gh$ and $\e_0>0$ such that for any $0\leq \e<\e_0$ there is a weak chimera~$A^{(\e)}\subset B_\delta(\Ac\times\Ai)$ with $\Sg(A^{(\e)})=\TG$ for the $\Sn\wr\Sk{2}$-equivariant dynamics of~\eqref{eq:ProductDyn} with~$\gh$.
\end{thm}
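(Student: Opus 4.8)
The plan is to obtain $A^{(\e)}$ directly from the perturbation result Theorem~\ref{thm:CWCns} and then to control its symmetries on average by combining the product estimate of Lemma~\ref{lem:SymProd} with the robustness of trivial symmetry under perturbation from Corollary~\ref{cor:NoSymPerturb}. Throughout I identify $A_1=\Ac$ and $A_2=\Ai$ and work with the group $\G=\Sn\wr\Sk{2}$ acting on $\Tornn$. The theorem is essentially a packaging statement: each of its two assertions---existence of a weak chimera, and triviality of $\Sg$---is supplied by an earlier result, so the work is in checking hypotheses and coordinating the smallness parameters.

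First I would verify the hypotheses of Theorem~\ref{thm:CWCns}. By assumption $\Ac$ and $\Ai$ are coupling function separated and sufficiently stable for the dynamics of~\eqref{eq:OscVF}, and a sufficiently stable relative equilibrium is in particular compact and forward invariant, so no extra argument is needed. Applying Theorem~\ref{thm:CWCns} with $A_1=\Ac$ and $A_2=\Ai$ then yields, for every sufficiently small $\delta>0$, a smooth coupling function $\gh$ and some $\e_0>0$ such that for all $0\leq\e<\e_0$ the product system~\eqref{eq:ProductDyn} with $g$ replaced by $\gh$ has a sufficiently stable weak chimera $A^{(\e)}\subset B_\delta(\Ac\times\Ai)$. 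This already delivers the existence claim together with the defining property $\TG\subsetneq\G_{\Om(\vphi^0)}\subsetneq\G$.

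It then remains to establish $\Sg(A^{(\e)})=\TG$. Since $\Ac$ and $\Ai$ are coupling function separated attractors with $\Sg(\Ac)=\Sg(\Ai)=\TG$, Lemma~\ref{lem:SymProd} gives $\Sg(\Ac\times\Ai)=\TG$. At $\e=0$ the product system decouples into two copies of~\eqref{eq:OscVF}, so $\Ac\times\Ai$ is a compact attractor of that flow; Corollary~\ref{cor:NoSymPerturb} therefore furnishes a $\delta_0>0$ with $\Sg(D)=\TG$ for every $D\subset B_{\delta_0}(\Ac\times\Ai)$. Shrinking the $\delta$ from the previous step so that $\delta<\delta_0$ (and small enough that $B_\delta(\Ac\times\Ai)\subset\CIR\times\CIR$, as Theorem~\ref{thm:CWCns} already requires) forces $A^{(\e)}\subset B_{\delta_0}(\Ac\times\Ai)$ and hence $\Sg(A^{(\e)})=\TG$, which finishes the proof.

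The main obstacle I anticipate is the bookkeeping of the several smallness constraints so that one choice of $\delta$ (and of $\e_0$) satisfies all of them at once: the radius needed for $A^{(\e)}$ to be a weak chimera in Theorem~\ref{thm:CWCns}, the radius $\delta_0$ from Corollary~\ref{cor:NoSymPerturb}, and the requirement that the neighborhood stay inside $\CIR\times\CIR$ so that the phase ordering within each population is preserved. A secondary point needing care is that Corollary~\ref{cor:NoSymPerturb} (via Proposition~\ref{prop:Symmetry}) is applied to $\Ac\times\Ai$ as an attractor of the decoupled $\e=0$ flow rather than of the coupled system; this is legitimate because $\Sg$ is a purely set-theoretic notion and the resulting $\delta_0$ is geometric, hence independent of $\e$. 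Finally, I note that the standing hypothesis $\Sg(\Ac)=\TG$ is exactly what Lemma~\ref{lem:EqInC} provides for a relative equilibrium with $\abs{\alpha_\maxdim}<\frac{\pi}{2}$, which is how one arranges the assumptions of the theorem in practice.
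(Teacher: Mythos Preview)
Your proposal is correct and follows essentially the same approach as the paper: apply Lemma~\ref{lem:SymProd} to obtain $\Sg(\Ac\times\Ai)=\TG$, use Corollary~\ref{cor:NoSymPerturb} to obtain a radius $\delta_0$ within which every subset has trivial setwise symmetry, and then invoke Theorem~\ref{thm:CWCns} for $\delta<\delta_0$ to produce the weak chimeras $A^{(\e)}$. The only difference is the order of presentation---the paper fixes $\delta_0$ first and then applies Theorem~\ref{thm:CWCns}, whereas you apply Theorem~\ref{thm:CWCns} first and then shrink $\delta$---but this is cosmetic.
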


\begin{proof}
Suppose that $\Sg(\Ac)=\Sg(\Ai)=\TG\subset\Sn$. By Lemma~\ref{lem:SymProd} we have $\Sg(\Ac\times\Ai)=\TG\subset\Sn\wr\Sk{2}$. Since $\Ac\times\Ai$ is assumed to be an attractor, Corollary~\ref{cor:NoSymPerturb} implies that there exists a~$\delta_0>0$ such that for any invariant set $A\subset B_{\delta_0}(\Ac\times\Ai)$ we have $\Sg(A)=\TG\subset\Sn\wr\Sk{2}$.

For sufficiently small $0<\delta<\delta_0$ Theorem~\ref{thm:CWCns} yields an~$\e_0>0$ and weak chimeras $A^{(\e)}\subset B_\delta(\Ac\times\Ai)$ for all $0\leq\e<\e_0$. Since $\delta<\delta_0$ the argument above implies that $\Sg(A^{(\e)})=\TG$ for all such~$\e$.
\end{proof}

\begin{rem}
\begin{enumerate}
\item In fact, the condition that $\Ai$ is a relative equilibrium is not necessary. Theorem~\ref{thm:ChaoticWeakChimNoSym} holds for any compact, sufficiently stable attractor~$\Ac\subset\CIR$ that is coupling function separated from~$\Ai$. Moreover, the same statement holds for (sufficiently unstable) repellers.
\item Even if the weak chimera $A^{(0)}=\Ac\times\Ai$ is observable, extra assumptions on the persistence of SRB measures are needed to prove that $A^{(\e)}$ is an observable weak chimera.
\end{enumerate}
\end{rem}


\newcommand{\inttime}{2\cdot10^5}

\section{A Numerical Example of a Chaotic Weak Chimera with Trivial Symmetry}
\label{sec:Example}
We now give an explicit example of a coupling function such that the dynamics of the product system~\eqref{eq:ProductDyn} give rise to a chaotic weak chimera with trivial symmetry for $\e>0$ following the construction described in the previous section. In contrast to the examples in~\cite{Bick2015c}, the main focus here is on the symmetries of the weak chimeras which we calculate explicitly.

Recall that the dynamics of~\eqref{eq:OscVF} for $\maxdim=4$ oscillators give rise to chaotic attractors~$A$ with~$\Sg(A)=\TG$~\cite{Bick2011, Bick2012b}. Define
\begin{equation}
\label{eq:gchaos}
{g}(\phi) = \sum_{r=0}^{4} c_r \cos (r\phi+\xi_r)
\end{equation}
with $c_1 = -2$, $c_2 = -2$, $c_3 = -1$, and $c_4 = -0.88$. For $\xi_1 = \eta_1$, $\xi_2 = -\eta_1$, $\xi_3 = \eta_1+\eta_2$, and $\xi_4 = \eta_1+\eta_2$ with $\eta_1=0.138$, $\eta_2=0.057511$ the dynamics of~\eqref{eq:OscVF} with this particular choice of coupling function~${g}$ give rise to a chaotic attracting set $\Ai\subset\CIR$ with positive maximal Lyapunov exponents and $\Sg(\Ai) = \TG$. For~$\Ai$ we have $\Xi(\Ai)\subset[0.4, 2\pi-0.4]$ as shown in Figure~\ref{fig:XiAttr}.

\begin{figure}
\includegraphics[scale=\imagescaling]{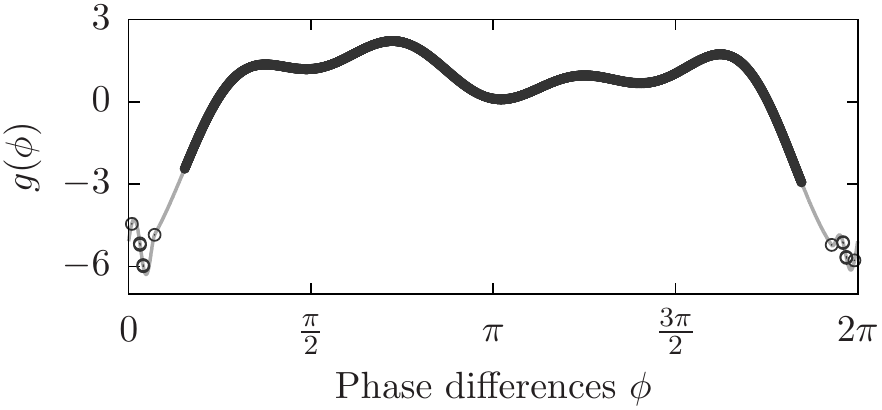}
\caption{\label{fig:XiAttr}
The attracting sets~$\Ai$ and~$\Ac$ of the dynamics given 
by~\eqref{eq:OscVF} with coupling function~$\gh$ are coupling 
function separated.
The coupling function~$\gh$ as defined in~\eqref{eq:gh} is depicted 
by a gray line. The values of~$g$ on~$\Xi(\Ai)$ are indicated by 
filled circles and on $\Xi(\Ac)$ by $12$ hollow circles.}
\end{figure}

A suitable local perturbation of the coupling function~$g$ yields bistability between~$\Ai$ and a relative equilibrium with trivial symmetry in the system defined by~\eqref{eq:OscVF}. Let
\begin{equation}
\label{eq:TripPerturb}
\tilde{g}(\phi) = \sum_{r=6}^{24} a_r \cos (r\phi+\zeta_r)
\end{equation}
with parameters $a_r$, $\zeta_r$ as given in Appendix~\ref{app:CouplingFunc}. Moreover, define
\[
\Bm(x) := \begin{cases}\exp\fleft(-\frac{1}{1-x^2}\right)&\text{if} -1<x<1,\\
0& \text{otherwise}\end{cases}
\]
and let $a\in\R$, $b \in (0, \pi)$ be parameters. Now define $\Bm_{ab}(\phi) := a \Bm\big(\frac{\phi}{b}\big)$ with~$\phi$ taken modulo~$2\pi$ with values in $(-\pi, \pi]$ is a $2\pi$-periodic ``bump function.''
Fix $a = 2.5$, $b=0.25$. Define the $\Cinf$ function
\begin{equation}
\label{eq:gh}
\gh := g + \tilde{g}\Bm_{ab}.
\end{equation}
We have $\gh(\phi) = g(\phi)$ for all $\phi\in [b, 2\pi-b]$. Since $\Xi(\Ai)\subset[b, 2\pi-b]$ we have $Y^{(\gh)}(\vphi)=Y^{(g)}(\vphi)$ for all $\vphi\in\Ai$. Thus~$\Ai\subset\CIR$ is also a chaotic attracting set for the dynamics of~\eqref{eq:OscVF} with coupling function~$\gh$. In addition, there is a stable relative periodic orbit $\vphi^\star(t) \approx (t{\omega^\star}, 0.0975+ t{\omega^\star}, 0.1253+ t{\omega^\star}, 0.2247+ t{\omega^\star})$. For $\Ac=\set{\vphi^\star(t)}{t\geq 0}$ we have $\Xi(\Ac)\subset[-0.3, 0.3]$. Therefore, the sets $\Ai$ and~$\Ac$ are coupling function separated; see Figure~\ref{fig:XiAttr}.

\begin{figure}
\subfloat[Phase evolution]{\includegraphics[scale=\imagescaling]{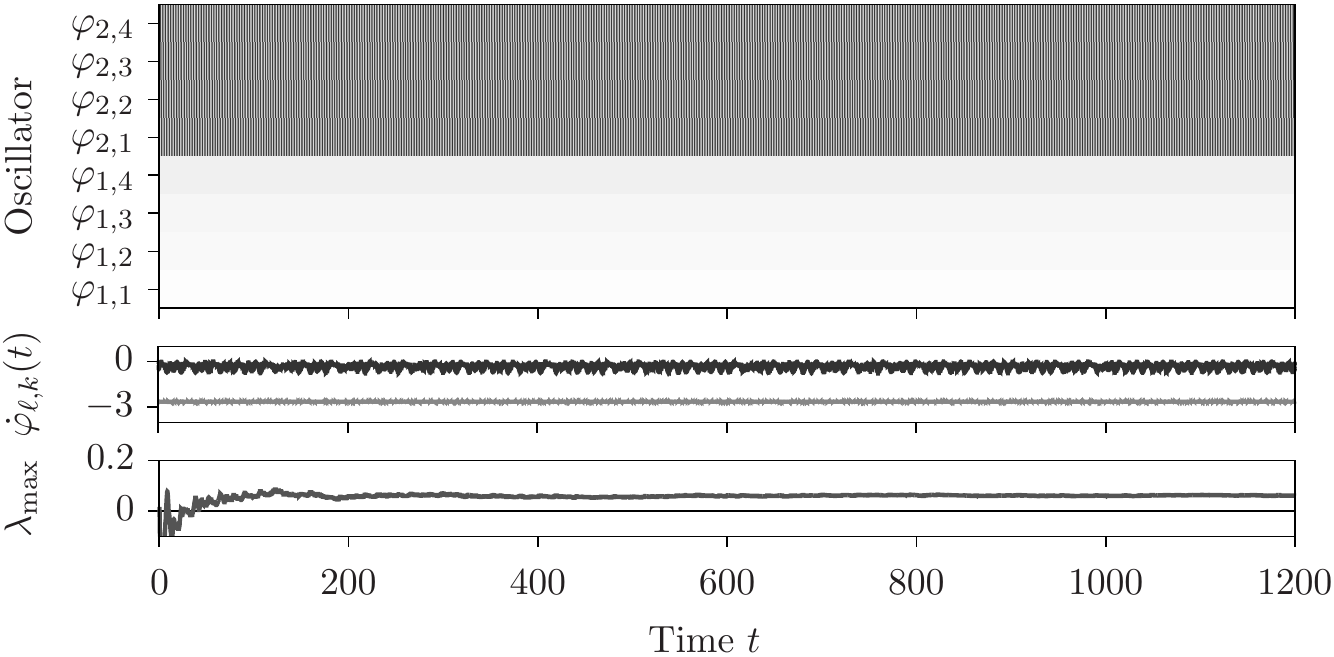}}\\
\subfloat[Projected dynamics of each population]{\includegraphics[scale=\imagescaling]{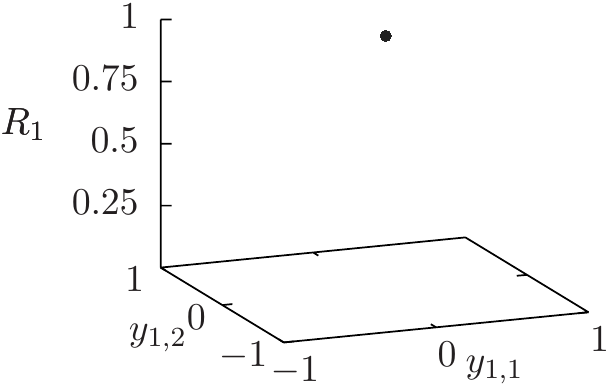}\quad\quad\includegraphics[scale=\imagescaling]{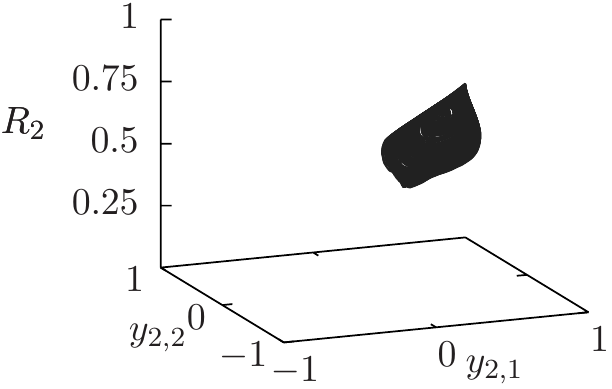}}
\caption{\label{fig:PerturbAttr}
Chaotic weak chimeras with trivial setwise symmetries appear in
the $\Sk{4}\wr\Sk{2}$-equivariant system~\eqref{eq:ProductDyn} with two populations of $\maxdim=4$ oscillators for $\e=0.01$.
Panel~(\textsc{a}) shows the phase evolution: the phase of the oscillators
(periodic color scale, $\vphi_{\ell,k}(t)=0$ in black and $\vphi_{\ell,k}(t)=\pi$ in white) in a co-rotating frame at the speed of the first oscillator is shown at the top, the instantaneous frequencies $\dot\vphi_{\ell,k}(t)$ in the middle, and convergence of the maximal Lyapunov exponent at the bottom. Panel~(\textsc{b}) shows the dynamics on the attracting set for each population in the $\Z_4$-equivariant projections $y_\ell = (\sin(\vphi_{\ell,3}-\vphi_{\ell,1}), \sin(\vphi_{\ell,4}-\vphi_{\ell,2}))$ where~$\Z_4$ are the permutations within populations that preserve the phase ordering.
}
\end{figure}

Now consider two weakly coupled populations~\eqref{eq:ProductDyn} of $\maxdim=4$ oscillators. Since $\Sg(\Ac)=\Sg(\Ai)=\TG$ we have that $\Sg(\Ac\times\Ai)=\TG$ for $\e=0$ and we expect dynamically invariant sets with trivial symmetry for small $\e>0$. We integrated system~\eqref{eq:ProductDyn} numerically\footnote{Integration was carried out in MATLAB using the variable order scheme \texttt{ode113} with a adaptive time step $\Delta t\leq 10^{-1}$ subject to conservative relative and absolute error tolerances of~$10^{-9}$ and~$10^{-11}$ respectively.} and calculated the maximal Lyapunov exponent from the variational equations. The attracting set~$A^{(\e)}$ for $\e=0.01$ close to $\Ac\times\Ai$ with trivial setwise symmetries and positive maximal Lyapunov exponent is shown in Figure~\ref{fig:PerturbAttr}; the absolute value of the local order parameter $R_\ell(t)=\tabs{\frac{1}{4}\sum_{j=1}^4\exp(i\vphi_{\ell,j})}$ gives information about the synchronization of each population: it is equal to one if all oscillators within the populations are phase synchronized.

\begin{figure}
\subfloat[Attractor in the $\Sk{4}$-equivariant projection~$y_\ell$]{\includegraphics[scale=\imagescaling]{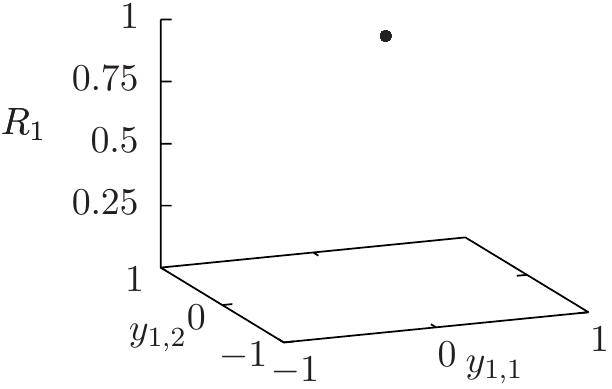}\quad\quad\includegraphics[scale=\imagescaling]{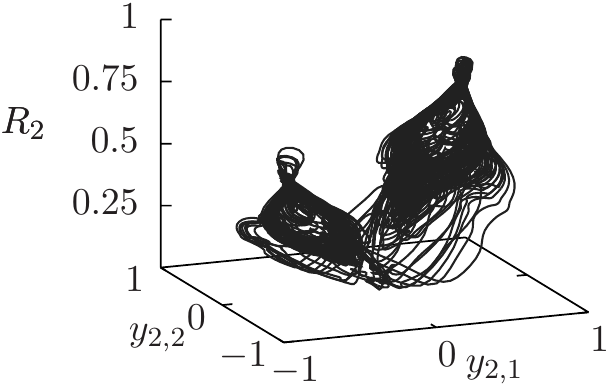}}\\
\subfloat[Maximal Lyapunov exponents and symmetries with varying $\e$]{\includegraphics[scale=\imagescaling]{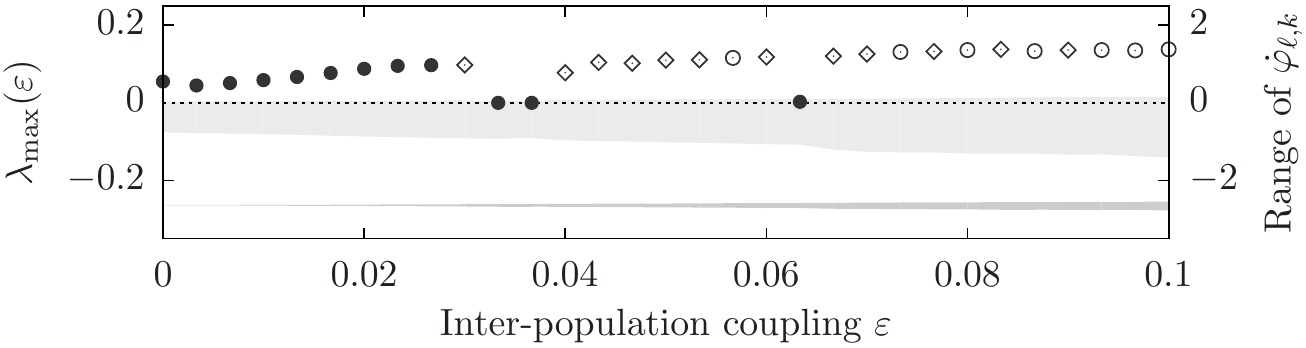}}
\caption{\label{fig:LyapScan}
Increasing~$\e$ yields chaotic weak chimeras that undergo symmetry increasing bifurcations. 
Panel~(\textsc{a}) shows a trajectory for~$\e=0.1$ converging to an attractor $A^{(\e)}$ with $\Sg(A^{(\e)})\neq\TG$.
Panel~(\textsc{b}) shows maximal Lyapnuov exponents obtained by integrating~\eqref{eq:ProductDyn} from a fixed initial condition on $A^{(0)}$ for $T=\inttime$ time units. The marker indicates the symmetry of the attractor~$A^{(\e)}\subset\Tornn$: ``$\bullet$'' for $\Sg(A^{(\e)})=\TG$, 
``$\diamond$'' if $\Sg(A^{(\e)})\stackrel{?}{=}\TG$, and ``$\circ$'' if $\Sg(A^{(\e)})\neq\TG$. The shaded regions show the intervals $[\min_{k,t}\dot\vphi_{\ell,k}(t), \max_{k,t}\dot\vphi_{\ell,k}(t)]$ for $\ell=1$ (dark gray) and $\ell=2$ (light gray)---where these do not overlap, there is no frequency synchronization between the two populations and hence a weak chimera. }
\end{figure}

For increasing coupling parameter~$\e$ (while keeping the initial condition fixed) the symmetries of the attracting chaotic weak chimeras~$A^{(\e)}$ change; cf.~Figure~\ref{fig:LyapScan}. We integrated the system for $T=\inttime$ time units to calculate both the maximal Lyapunov exponents and detect the presence of nontrivial symmetries. For $A^{(\e)}\subset\CIR^2$ we have to check for permutations of oscillators that preserve the ordering of the phases within each population to determine the symmetry of the attractor. To this end, we calculated the ergodic average $S_\ell(\e) = \int_0^T\sin(\vphi_{\ell,3}(t)-\vphi_{\ell,1}(t))\ud t$ along the trajectory which converges zero if $\Sg(A^{(\e)})\neq\TG$. Note that if symmetric copies of attractors merge in a symmetry increasing bifurcation~\cite{Chossat1988}, these ergodic averages may converge very slowly. Previous numerical investigations of the chaotic attractor in the uncoupled system~\cite{Bick2012b} showed that attractors with trivial symmetry are confined to one quadrant under the projection $y_\ell = (\sin(\vphi_{\ell,3}-\vphi_{\ell,1}), \sin(\vphi_{\ell,4}-\vphi_{\ell,2}))$. Thus, the number of quadrants $Q_\ell(\e)$ that the projected trajectory enters being greater than one indicates that a symmetry increasing bifurcation may have occurred; compare also Figures~\ref{fig:PerturbAttr}(b) and~\ref{fig:LyapScan}(a). Consequently, we conclude $\Sg(A^{\e})=\TG$ if~$\abs{S_2(\e)}>10^{-1}$---but we write $\Sg(A^{\e})\stackrel{?}{=}\TG$ if $Q_2(\e)>1$ at the same time to indicate that a symmetry increasing bifurcation may have happened already---and $\Sg(A^{\e})\neq\TG$ otherwise.

Further numerical investigation shows that there is multistability for $\e\geq 0$; the attracting sets~$A^{(\e)}$ for $\e>0$ may coexist with other attracting solutions (not shown).

\section{Discussion}
\label{sec:Discussion}
If a dynamical system has permutational symmetry~$\G$, what are the symmetry properties of the asymptotic angular frequencies which describe how trajectories wind around phase space? For the dynamical systems on~$\Csn$ considered in Section~\ref{sec:WeakChimeras}, the asymptotic angular frequencies are given by averages of $\G$-equivariant observables. This observation yields a natural reformulation of the notion of a weak chimera in terms of the isotropy of the vector of asymptotic angular frequencies. Our definition is not only compatible with the action of~$\G$ but also goes beyond phase oscillators in the weak coupling limit: it applies to more general oscillator models where chimera states have been reported~\cite{Sethia2013, Zakharova2014}. With a rigorous definition in place, it would be desirable to prove the existence of weak chimeras in such systems and show that the dynamics observed are persistent phenomena. These ideas equally apply to more general spaces~$\X$ with a symmetry group acting on it; here asymptotic winding numbers of asymptotic cycles describe the rotation of a trajectory with respect to topological properties of~$\X$~\cite{Schwartzman1957, Fried1982, Walsh1995}. Precisifying the notion of a weak chimera for general topological spaces~$\X$ with symmetry is beyond the scope of the current paper and will be addressed in future work.

Using coupling functions that give rise to relative equilibria with trivial symmetry, we showed that for symmetric phase oscillator systems that there are indeed weak chimeras that have symmetries in the frequencies that are not present in the solutions. This motivates some further symmetry related questions. For example, what are the possible isotropy groups of the angular frequency vector for a $\Gamma$-equivariant system that do not arise from the symmetries of the solutions themselves? (These are obviously restricted to subgroups of the symmetry group.) Which symmetry increasing bifurcations happen as the inter-population coupling~$\e$ is increased (Figure~\ref{fig:LyapScan})? While chaotic dynamics do persist up to $\e\approx 0.1$ (and for other choices of coupling function even up to $\e\approx 0.3$~\cite{Bick2015c}), chaotic weak chimeras with trivial symmetry only persist for values of~$\e$ close to zero. Thus, are there chaotic weak chimeras with trivial symmetry for ``strongly coupled'' populations of phase oscillators? Moreover, 
in general there will be more than one ergodic invariant measure supported on a weak chimera. For each of these measures we obtain asymptotic angular frequency vectors that potentially have different isotropy. While the set of all measures supported on the invariant set of interest~\cite{Jenkinson2006} yields bounds of the asymptotic angular frequencies (see also~\cite{Bick2015c}), a more detailed understanding what the specific isotropy subgroups for the invariant measures are and how they bifurcate would be desirable.

It is also worth noting that asymptotic angular frequencies as averages and their isotropy may still be well be defined if the permutational symmetry of the system is broken due to a (small) perturbation. However, care has to be taken to extend the notion of a weak chimera to nearly symmetric systems since symmetry breaking can have drastic effects on frequency synchronization~\cite{Ashwin2006}. 

``Classical'' chimera states were first observed on rings of nonlocally coupled phase oscillators~\cite{Kuramoto2002}. A finite-dimensional approximation yields a dynamical system that is equivariant with respect to the action of the dihedral group~\cite{Ashwin1992}. Roughly speaking, classical chimeras on finite dimensional rings are trajectories that show characteristic angular frequency synchronization for some finite time as they exhibit pseudo-random drift along  the ring before converging to the fully synchronized state~\cite{Omelchenko2010, Wolfrum2011b}. These are not weak chimeras in the sense defined above. By contrast, initial conditions in the (dynamically invariant) fixed point spaces of a reflection symmetry yield symmetric solutions that eventually converge to the fully synchronized state~\cite{Omelchenko2015}, resembling a transient weak chimera. Interestingly, the chaotic weak chimeras constructed here share an important feature with these ``classical'' chimera states: the isotropy of the angular frequency vector may be larger than the symmetry of the solution itself as the oscillators in the ``coherent'' region are never perfectly phase synchronized. Thus, clarifying the relationship between classical chimera states on rings and the symmetry of the system further---also with respect to the symmetries of the system that describes the continuum limit---provides exciting directions for future research.

\section*{Acknowledgements}
The author would like to thank the anonymous referees for their feedback which significantly helped to improve the presentation of the results. Moreover, the author would like to thank Peter Ashwin, Erik Martens, and Oleh Omel'chenko for stimulating discussions and critical feedback on the manuscript. The research leading to these results has received funding from the People Programme (Marie Curie Actions) of the European Union's Seventh Framework Programme (FP7/2007--2013) under REA grant agreement no.~626111.

{\scriptsize

}

\appendix

\section{A trigonometric polynomial coupling function}
\label{app:CouplingFunc}
The Fourier coefficients of~\eqref{eq:TripPerturb} in 
Section~\ref{sec:Example} are given by
\begin{align*}
a_6&=-0.676135392447403 &   \zeta_6&=0.846647746060342\\
a_8&=0.844660333390606 &  \zeta_8&=0.954985847962987   \\
a_{10}&=0.087624615584542 &\zeta_{10} &= 0.212748482509925  \\
a_{12}&=-0.644961491438887 &\zeta_{12} &= 0.025296512718163  \\
a_{14} &= -0.459724407978054 &\zeta_{14} &= 0.180050952622569\\
a_{16} &= -1.175355598611419 & \zeta_{16} &= 0.835173783095831 \\  
a_{18}&=0.799302873723814  & \zeta_{18} &= 0.850732311209280  \\
a_{20} &= -1.303832930713680 & \zeta_{20} &=  0.863697094160152 \\  
a_{22} &= 0.094742514998172 &\zeta_{22} &= 0.355260772731067  \\
a_{24} &= -2.293749915528502 &\zeta_{24} &= 0.463364388737488
\end{align*}
and $a_r=0$, $\zeta_r=0$ for all other $r\in\N$.

\end{document}